\theoremstyle{plain}
\let\originalleft\left
\let\originalright\right
\renewcommand{\left}{\mathopen{}\mathclose\bgroup\originalleft}
\renewcommand{\right}{\aftergroup\egroup\originalright}
\newtheorem{lemma}{Lemma}[section]
\newtheorem{theorem}[lemma]{Theorem}
\newtheorem{cor}[lemma]{Corollary}
\newtheorem{prop}[lemma]{Proposition}
\newtheorem{exam}[lemma]{\normalfont \scshape
 Example}
\newtheorem{rem}[lemma]{\normalfont \scshape Remark}
\newtheorem{defi}[lemma]{Definition}
\newcommand{\R}{\mathbb{R}}
\newcommand{\N}{\mathbb{N}}
\newcommand{\norm}[1]{\left\Vert#1\right\Vert}
\newcommand{\abs}[1]{\left\vert#1\right\vert}
\newcommand{\set}[1]{\left\{#1\right\}}
\newcommand{\eps}{\varepsilon}
\newcommand{\bfx}{\bm{x}}
\newcommand{\bfzero}{\bm{0}}
\newcommand{\bfU}{\bm{U}}
\newcommand{\bfV}{\bm{V}}
\newcommand{\bfX}{\bm{X}}
\newcommand{\bfY}{\bm{Y}}
\newcommand{\bfZ}{\bm{Z}}
\newcommand{\bfeta}{\bm{\eta}}
\newcommand{\bfxi}{\bm{\xi}}
\newcommand{\barE}{ E^-[0,1]}
\newcommand{\barC}{ C^-[0,1]}
\DeclareMathOperator{\Ei}{Ei}
\begin{document}

\title[Max-Stable Processes Revisited]{Max-Stable Processes and the Functional $D$-Norm Revisited}
\author{Stefan Aulbach, Michael Falk, Martin Hofmann \and Maximilian Zott}
\address{$^1$University of W\"{u}rzburg,
Institute of Mathematics,  Emil-Fischer-Str. 30, 97074 W\"{u}rzburg, Germany.}
\email{stefan.aulbach@uni-wuerzburg.de,
michael.falk@uni-wuerzburg.de,\newline
hofmann.martin@mathematik.uni-wuerzburg.de, maximilian.zott@uni-wuerzburg.de}

\keywords{Max-stable process, $D$-norm, functional max-domain of attraction,
copula process, generalized Pareto process, $\delta$-neighborhood of
generalized Pareto process, derivative of $D$-norm, distributional differentiability}

\dedicatory{The final publication is available at Springer via
\url{http://dx.doi.org/10.1007/s10687-014-0210-0}}

\begin{abstract}
\citet{aulfaho11} introduced a max-domain of attraction approach for
extreme value theory in $C[0,1]$ based on functional distribution
functions, which is more general than the approach based on weak
convergence in \citet{dehal01}. We characterize this new approach by
decomposing a process into its univariate margins and its copula process.
In particular, those processes with a polynomial rate of convergence
towards a max-stable process are considered. Furthermore we investigate the
concept of differentiability in distribution of a max-stable processes.
\end{abstract}

\maketitle

\section{Introduction}
A stochastic process $\bfxi=(\xi_t)_{t\in[0,1]}$ on the interval $[0,1]$,
whose sample paths belong to the space $C[0,1]$ of continuous functions on
$[0,1]$, is called \emph{max-stable} (MSP), if there are norming functions
$a_n,b_n\in C[0,1]$, $a_n>0$, such that the distribution of the process
$\max_{1\le i\le n}\left(\bfxi^{(i)}-b_n\right)/a_n$ coincides with that of
$\bfxi$ for each $n\in\N$. By $\bfxi^{(1)},\bfxi^{(2)},\dots$ we denote
independent copies of $\bfxi$.

\citet{aulfaho11} established a characterization of the distribution of an
MSP via a norm on $E[0,1]$, the space of bounded functions on $[0,1]$ which
have finitely many discontinuities. This norm  is called $D$-norm and it is
defined by means of a so-called generator process.

An MSP $\bfeta=(\eta_t)_{t\in[0,1]}\in C[0,1]$ with standard negative
exponential margins $P(\eta_t\le x)=\exp(x)$, $x\le 0$, will be called a
\emph{standard max-stable} process (SMSP). As the distribution of a
stochastic process on $[0,1]$ is determined by its finite dimensional
marginal distributions, a process $\bfeta\in C[0,1]$ with identical marginal distribution function
(df) $F(x)=\exp(x)$, $x\le 0$, is standard max-stable if and only if
\begin{equation} \label{eqn:standard_max-stable}
P\left(\bfeta\le \frac fn\right)^n=P(\bfeta\le f),\qquad f\in E^-[0,1],\,n\in\N,
\end{equation}
where $E^-[0,1]$ denotes the subset of those functions in $E[0,1]$, which
attain only non positive values. Note that it would be sufficient in equation \eqref{eqn:standard_max-stable} to consider
$f\in C^-[0,1]$ of non positive continuous functions. The extension
to $E^-[0,1]$, however, provides the inclusion of the finite dimensional marginal
distributions (fidis) of $\bfeta$, as
\[
P(\eta_{t_i}\le x_i,\ 1\le i\le d) = P(\bfeta\le f)
\]
where $0\le t_1<\dots<t_d\le1$ and $f\in E^-[0,1]$ is given by $f(t_i) =
x_i<0$ for $i\in\set{1,\dots,d}$ and $f(t) = 0$ for
$t\in[0,1]\setminus\set{t_1,\dots,t_d}$.

From \citet{aulfaho11} we know that \eqref{eqn:standard_max-stable} is
equivalent with the condition that there is some norm $\norm\cdot_D$ on
$E[0,1]$, called \emph{$D$-norm}, satisfying
\begin{equation}\label{eqn:df_of_smsp}
P(\bfeta\le f)=\exp\left(-\norm f_D\right),\qquad f\in E^-[0,1].
\end{equation}
Precisely, there exists a stochastic process $\bfZ=(Z_t)_{t\in[0,1]}\in
C[0,1]$ with
\[
0\le Z_t\le m,\quad E(Z_t)=1,\qquad t\in[0,1],
\]
for some number $m\ge 1$, such that
\[
\norm f_D=E\left(\sup_{t\in[0,1]}(\abs{f(t)}Z_t)\right),\qquad f\in E[0,1].
\]
The condition $Z_t\le m$, $t\in[0,1]$, can be weakened to
$E\left(\sup_{t\in[0,1]}Z_t\right)<\infty$. Observe that property
\eqref{eqn:df_of_smsp} of a max-stable process $\bfeta$ with standard negative exponential margins corresponds to the spectral representation of a max-stable process with unit Fr\'{e}chet margins as considered in \citet{resroy91} and \citet{dehaan84}, since
$P\left(-\eta_t^{-1}\le y\right) = \exp\left(-y^{-1}\right)$ for $y>0$ and
$t\in[0,1]$.

Based on this characterization, \citet{aulfaho11} introduced a functional
domain of attraction approach for stochastic processes in terms of
convergence of their \emph{distribution functions}. This functional domain of attraction is larger
than the one based on \emph{weak convergence} as investigated in
\citet{dehal01}, cf. \citet[Proposition\,5]{aulfaho11}. In Section
\ref{sec:characterization_of_fmda} of the present paper we will carry over
\citeauthor{dehal01}'s (\citeyear{dehal01}) characterization of max-domain of
attraction for stochastic processes in $C[0,1]$ in terms of weak convergence
to our domain of attraction approach based on convergence of df.

\citet{buihz08} suggested the definition of \emph{generalized Pareto
processes} (GPP), which extends the multivariate Peaks-Over-Threshold approach to function spaces.
This particular approach was investigated and settled in \citet{ferrdh12},
\citet{aulfaho11} and \citet{domri13}. In Section
\ref{sec:Spectral_delta_Neighborhood_of_a_standard_GPP} we will introduce certain
$\delta$-neighborhoods of GPP, which can be characterized by their rate of
convergence towards a max-stable process. This is in complete accordance with
the multivariate case.

Finally, we establish the concept of differentiability in distribution of an SMSP in Section 4. To this end, we investigate some properties of SMSP such as the partial derivatives of a $D$-norm, the distribution of the increments of an SMSP and the conditional distribution of an SMSP given one point being observed.

To improve the readability of this paper we use bold face such as $\bfxi$,
$\bfX$ for stochastic processes and default font $f$, $a_n$ etc. for non
stochastic functions. Operations on functions such as $\bfxi<f$ or
$(\bfxi-b_n)/a_n$ are meant pointwise. The usual abbreviations \textit{iid, a.s.} and \textit{rv} for the terms \textit{independent and identically distributed, almost surely} and \textit{random
variable}, respectively,  are used.

\section{A Characterization of Max-Domain of Attraction}\label{sec:characterization_of_fmda}

In the multivariate framework, it is well-known that a rv
$(X_1,\dots,X_d)$ is in the domain of attraction of a multivariate max-stable
distribution if and only if its copula has this property and the distribution
of $X_i$ is  in the univariate domain of attraction of a
max-stable distribution for each $i=1,\dots,n$. We refer to \citet{gal78}, \citet{deheu78,deheu84}
and \citet{aulbf11} for details.

\Citet{dehal01} extended this result to the usual weak convergence of the maximum of $n$ iid stochastic processes, linearly standarized, towards a max-stable process in $C[0,1]$. They provided a condition which entails a characterization of the \emph{domain of attraction} property in the sense of usual weak convergence in $C[0,1]$ of stochastic processes. This condition is essentially condition~\eqref{eqn:final_equivalent_crucial_condition_on_marginal_df} below. The characterization of the domain of attraction property then consists of uniform weak convergence on $[0,1]$ of the \emph{marginal} maxima to a univariate extreme value distribution, together with weak convergence of the corresponding \emph{copula process} in function space. We will carry over
\citeauthor{dehal01}'s (\citeyear{dehal01}) result to our domain of
attraction approach based on convergence of dfs of stochastic processes, see Theorem \ref{thm:equivalence_of_convergence} and Corollary \ref{cor:equivalence_of_fda} below. As established in \citet[Proposition 5]{aulfaho11}, the max-domain of attraction we consider is larger than that based on ordinary weak convergence. But it is still an open problem whether it is \emph{strictly} larger.

Let $\bfX=(X_t)_{t\in[0,1]}\in C[0,1]$ be a
stochastic process with continuous marginal df $F_t(x)=P(X_t\le x)$,
$x\in\R$, $t\in[0,1]$, and let $\bfxi=(\xi_t)_{t\in[0,1]}\in C[0,1]$ be an
MSP with marginal df $G_t$, $t\in[0,1]$. Suppose that there exist norming
functions $a_n, b_n\in C[0,1]$, $a_n>0$, $n\in\N$, such that
\begin{equation}\label{eqn:final_equivalent_crucial_condition_on_marginal_df}
\sup_{t\in[0,1]}\abs{n\big(F_t(a_n(t)f(t)+b_n(t)) - 1\big)-\log\big(G_t(f(t))\big)}\to_{n\to\infty} 0
\end{equation}
for each $f\in E[0,1]$ with $\inf_{t\in[0,1]}G_t(f(t))>0$.
This is essentially condition (3.11) in
\citet{dehal01}.  Using Taylor expansion
$\log(1+\varepsilon)=\varepsilon + O(\varepsilon^2)$ as $\varepsilon\to 0$,
condition \eqref{eqn:final_equivalent_crucial_condition_on_marginal_df} in particular implies weak convergence of the univariate margins
\[
F_t(a_n(t)x+b_n(t))^n\to_{n\to\infty} G_t(x),\qquad x\in\R, t\in[0,1].
\]

Put $\bfU:=(U_t)_{t\in[0,1]}:=(F_t(X_t))_{t\in[0,1]}$, which is the
\emph{copula process} corresponding to $\bfX$. Let
$\bfU^{(1)},\bfU^{(2)},\dots$ be independent copies of $\bfU$, and let
$\bfX^{(1)},\bfX^{(2)},\dots$ be independent copies of $\bfX$. The following
theorem is the main result of this section.

For the implication
$\eqref{eqn:functional_domain_of_attraction_assumption}\implies
\eqref{eqn:copula_domain_of_attraction_assumption}$ we set
$\eta_t:=\log(G_t(\xi_t))$, $t\in[0,1]$, and for the reverse conclusion
$\xi_t:=G_t^{-1}(\exp(\eta_t))$, $t\in [0,1]$. In both cases the processes
$\bfxi:=(\xi_t)_{t\in[0,1]}$ and $\bfeta:=(\eta_t)_{t\in[0,1]}\in C[0,1]$ are
max-stable, $\bfeta$ being an SMSP. By Lemma 1 in \citet{aulfaho11} or the elementary arguments as in the proof
of Theorem 9.4.1 in \citet{dehaf06}, one obtains that
$P(G_t(\xi_t)=0\mathrm{\ for\ some\ }t\in[0,1])=0$, i.e., the processes
$\bfeta$ and $\bfxi$ are well defined.

\begin{theorem}\label{thm:equivalence_of_convergence}
We have under condition \eqref{eqn:final_equivalent_crucial_condition_on_marginal_df}
\begin{equation}\label{eqn:functional_domain_of_attraction_assumption}
P\left(\max_{1\le i\le n}\frac{\bfX^{(i)}-b_n}{a_n}\le f\right) \to_{n\to\infty} P(\bfxi\le f), \qquad f\in E[0,1],
\end{equation}
if and only if
\begin{equation}\label{eqn:copula_domain_of_attraction_assumption}
P\left(n\max_{1\le i\le n}\left(\bfU^{(i)}-1\right)\le g\right) \to_{n\to\infty} P(\bfeta\le g), \qquad g\in E^-[0,1].
\end{equation}
\end{theorem}

\begin{proof}
As $\bfxi$ has continuous sample paths and $G_t$ is continuous for each
$t\in[0,1]$, we have continuity of the function $[0,1]\ni t\mapsto G_t(x)$
uniformly for $x\in\R$. Thus
\begin{equation*}
  \abs{G_t(x)-G_{t_0}(x_0)}
  \le \abs{G_t(x)-G_{t_0}(x)} + \abs{G_{t_0}(x)-G_{t_0}(x_0)}
\end{equation*}
implies the continuity of the function $[0,1]\times\R\ni (t,x)\mapsto
G_t(x)$. This shows in particular that any discontinuity of $[0,1]\ni
t\mapsto G_t(h(t))$ is also a discontinuity of $h\in E[0,1]$. The same
arguments apply to the function $[0,1]\times\R\ni (t,x)\mapsto F_t(x)$.

We first establish the implication
$\eqref{eqn:functional_domain_of_attraction_assumption}\implies
\eqref{eqn:copula_domain_of_attraction_assumption}$. Choose $g\in E^-[0,1]$
with $\sup_{t\in[0,1]}g(t)<0$ and put $f(t):=G_t^{-1}(\exp(g(t))$. Then $f\in
E[0,1]$ and we obtain from assumption
\eqref{eqn:functional_domain_of_attraction_assumption}
\begin{equation}\label{eqn:variant_of_fda_assumption}
  P\left(\max_{1\le i\le n}\bfX^{(i)}\le a_n f+b_n\right)\to_{n\to\infty}P(\bfxi\le f)=P(\bfeta\le g)=\exp\left(-\norm g_D\right),
\end{equation}
where $\norm\cdot_D$ is the $D$-norm corresponding to the SMSP $\bfeta$.

We have, on the other hand, by condition
\eqref{eqn:final_equivalent_crucial_condition_on_marginal_df} and the continuity of the marginal df $F_t$
\begin{align*}
  &P\left(\max_{1\le i\le n}\bfX^{(i)}\le a_n f+b_n\right)\\
  &= P\left(n\max_{1\le i\le n}\left(U_t^{(i)}-1\right)\le  n\big(F_t(a_n(t) f(t)+b_n(t))-1\big),\,t\in[0,1]\right)\\
  &= P\left(n\max_{1\le i\le n}\left(U_t^{(i)}-1\right)\le g(t) + r_n(t),\,t\in[0,1]\right),
\end{align*}
where $r_n(t)=o(1)$ as $n\to\infty$, uniformly for $t\in[0,1]$. As an intermediate step we claim that
\begin{equation}\label{eqn:fda_of_copula process_for_g_<_0}
P\Bigl(n\max_{1\le i\le n}\left(\bfU^{(i)}-1\right)\le g\Bigr) \to_{n\to\infty} P(\bfeta\le g),\quad g\in E^-[0,1],\,\sup_{t\in[0,1]}g(t)<0.
\end{equation}
Replace $g$ by $g+\varepsilon$ and $g-\varepsilon$ for $\varepsilon>0$ small
enough such that $g+\varepsilon<0$, and put
\[
f_\varepsilon(t):= G_t^{-1}(\exp(g(t)+\varepsilon)),\quad f_{-\varepsilon}(t):= G_t^{-1}(\exp(g(t)-\varepsilon)),\qquad t\in[0,1].
\]
Then $f_\varepsilon, f_{-\varepsilon}\in E[0,1]$, and we obtain from
condition \eqref{eqn:final_equivalent_crucial_condition_on_marginal_df} and
equation \eqref{eqn:variant_of_fda_assumption} for $n\ge n_0$
\begin{align*}
&P\left(n\max_{1\le i\le n}\left(U_t^{(i)}-1\right)\le n\big( F_t(a_n(t)f_\varepsilon(t)+b_n(t))-1\big),\,t\in[0,1]\right)\\
&\ge P\left(n\max_{1\le i\le n}\left(\bfU^{(i)}-1\right)\le g\right)\\
&\ge P\left(n\max_{1\le i\le n}\left(U_t^{(i)}-1\right)\le n\big( F_t(a_n(t)f_{-\varepsilon}(t)+b_n(t))-1\big),\,t\in[0,1]\right),
\end{align*}
where the upper bound converges to $\exp\left(-\norm{g+\varepsilon}_D\right)$
and the lower bound to $\exp\left(-\norm{g-\varepsilon}_D\right)$. As both
converge to $\exp\left(-\norm g_D\right)$ as $\varepsilon\to 0$, we have
established \eqref{eqn:fda_of_copula process_for_g_<_0}.

Next we claim that \eqref{eqn:fda_of_copula process_for_g_<_0} is true for
each $g\in E^-[0,1]$, i.e., we drop the assumption $\sup_{t\in[0,1]}g(t)<0$.
From \eqref{eqn:fda_of_copula process_for_g_<_0} we
deduce that for each $\varepsilon>0$
\[
\lim_{n\to\infty} P\left(n\max_{1\le i\le n}\left(\bfU^{(i)}-1\right)\le g-\varepsilon\right)=\exp\left(-\norm{g-\varepsilon}_D\right)
\]
and, thus,
\begin{align*}
\liminf_{n\to\infty} P\left(n\max_{1\le i\le n}\left(\bfU^{(i)}-1\right)\le g\right)
&\ge \lim_{n\to\infty} P\left(n\max_{1\le i\le n}\left(\bfU^{(i)}-1\right)\le g-\varepsilon\right)\\
&= \exp\left(-\norm{g-\varepsilon}_D\right).
\end{align*}
As $\varepsilon > 0$ was arbitrary, we have established that
\[
\liminf_{n\to\infty} P\left(n\max_{1\le i\le n}\left(\bfU^{(i)}-1\right)\le g\right)\ge \exp\left(-\norm g_D\right).
\]

On the other hand, we have by \eqref{eqn:fda_of_copula process_for_g_<_0}
for $\varepsilon>0$
\[
\lim_{n\to\infty} P\left(n\max_{1\le i\le n}\left(\bfU^{(i)}-1\right)\le -\varepsilon\right) =\exp\left(-\varepsilon\norm 1_D\right)\to_{\varepsilon\downarrow 0} 1,
\]
and, thus,
\begin{align*}
&\limsup_{n\to\infty} P\left(n\max_{1\le i\le n}\left(\bfU^{(i)}-1\right)\le g\right)\\
&\le \limsup_{n\to\infty}\Biggl( P\left(n\max_{1\le i\le n}\left(\bfU^{(i)}-1\right)\le g,\, n\max_{1\le i\le n}\left(\bfU^{(i)}-1\right)\le -\varepsilon\right)\\
&\hspace*{5cm}+ P\left(\left(n \max_{1\le i\le n}\left(\bfU^{(i)}-1\right) \le -\varepsilon\right)^\complement\right)\Biggr)\\
&= \exp\left(-\norm{(\min(g(t),-\varepsilon)_{t\in[0,1]}}_D\right) + 1-\exp\left(-\varepsilon \norm 1_D\right)
\end{align*}
by \eqref{eqn:fda_of_copula process_for_g_<_0}. As the first term in the
final line above converges to $\exp\left(-\norm g_D\right)$ as
$\varepsilon\downarrow 0$ and the second one to zero, we have established that
\[
\limsup_{n\to\infty} P\left(n\max_{1\le i\le n}\left(\bfU^{(i)}-1\right)\le g\right)\le \exp\left(-\norm g_D\right).
\]
This proves equation \eqref{eqn:fda_of_copula process_for_g_<_0} for
arbitrary $g\in E^-[0,1]$ and completes the proof of the conclusion
$\eqref{eqn:functional_domain_of_attraction_assumption}\implies
\eqref{eqn:copula_domain_of_attraction_assumption}$.

Next we establish the implication
$\eqref{eqn:copula_domain_of_attraction_assumption}\implies
\eqref{eqn:functional_domain_of_attraction_assumption}$. Choose $f\in E[0,1]$
with $\inf_{t\in[0,1]}G_t(f(t))>0$ and put $g(t):=\log(G_t(f(t)))$,
$t\in[0,1]$. From the assumption
\eqref{eqn:copula_domain_of_attraction_assumption} we obtain
\[
P\left(n\max_{1\le i\le n}\left(\bfU^{(i)}-1\right)\le g\right)\to_{n\to\infty} P(\bfeta\le g)=P(\bfxi\le f)=\exp\left(-\norm g_D\right).
\]

On the other hand, we have by condition
\eqref{eqn:final_equivalent_crucial_condition_on_marginal_df}
\begin{align*}
&P\left(n\max_{1\le i\le n}\left(\bfU^{(i)}-1\right)\le g\right)\\
&= P\left(n\max_{1\le i\le n}\left(\bfU_t^{(i)}-1\right)\le n\big(F_t(a_n(t) f(t)+b_n(t))-1\big) +  r_n(t),\,t\in[0,1]\right),
\end{align*}
where $r_n(t)=o(1)$ as $n\to\infty$, uniformly for $t\in[0,1]$. We claim that
\begin{align}\label{eqn:fda_of_copula_processes}
&P\left(n\max_{1\le i\le n}\left(\bfU_t^{(i)}-1\right)\le n\big(F_t(a_n(t) f(t)+b_n(t))-1\big),\,t\in[0,1]\right)\nonumber\\
&\to_{n\to\infty} P\left(\bfeta\le g\right)
=\exp\left(-\norm g_D\right).
\end{align}
Replace $g$ by $\min(g+\varepsilon,0)$ and $g-\varepsilon$, where
$\varepsilon>0$ is arbitrary. Then we obtain from
\eqref{eqn:copula_domain_of_attraction_assumption} and condition
\eqref{eqn:final_equivalent_crucial_condition_on_marginal_df} for $n\ge n_0=n_0(\varepsilon)$
\begin{align*}
  &P\left(n\max_{1\le i\le n}\left(\bfU^{(i)}-1\right)\le \min(g+\varepsilon,0)\right)\\
  &=P\left(n\max_{1\le i\le n}\left(\bfU^{(i)}-1\right)\le g+\varepsilon\right)\\
  &\ge P\left(n\max_{1\le i\le n}\left(\bfU^{(i)}-1\right)\le n\big(F_t(a_n(t) f(t)+b_n(t))-1\big),\,t\in[0,1]\right)\\
  &\ge P\left(n\max_{1\le i\le n}\left(\bfU^{(i)}-1\right)\le g-\varepsilon\right).
\end{align*}
Since
\begin{equation*}
P\left(n\max_{1\le i\le n}\left(\bfU^{(i)}-1\right)\le \min(g+\varepsilon,0)\right)\to_{n\to\infty}\exp\left(-\norm{\min(g+\varepsilon,0)}_D\right)
\end{equation*}
as well as
\begin{equation*}
P\left(n\max_{1\le i\le n}\left(\bfU^{(i)}-1\right)\le
g-\varepsilon\right)\to_{n\to\infty}
\exp\left(-\norm{g-\varepsilon}_D\right)
\end{equation*}
and $\varepsilon>0$ was arbitrary, \eqref{eqn:fda_of_copula_processes}
follows.

From the equality
\begin{align*}
&P\left(n\max_{1\le i\le n}\left(\bfU^{(i)}-1\right)\le n\big(F_t(a_n(t) f(t)+b_n(t))-1\big),\,t\in[0,1]\right)\\
&=P\left(\max_{1\le i\le n}\bfX^{(i)}\le a_nf+b_n\right)
\end{align*}
and from \eqref{eqn:fda_of_copula_processes} we obtain
\begin{equation}\label{eqn:fda_for_X}
\lim_{n\to\infty} P\left(\max_{1\le i\le n}\bfX^{(i)}\le a_nf+b_n\right) =P(\bfxi\le f)
\end{equation}
for each $f\in E[0,1]$ with $\inf_{t\in[0,1]}G_t(f(t))>0$. If
$\inf_{t\in[0,1]}G_{t}(f(t))=0$, then, for $\varepsilon >0$, there  exists
$t_0\in[0,1]$ such that $G_{t_0}(f(t_0))\le\varepsilon$. We, thus, have
$P(\bfxi\le f)\le P\left(\xi_{t_0}\le f(t_0)\right)=G_{t_0}(f(t_0))\le
\varepsilon$ and, by condition
\eqref{eqn:final_equivalent_crucial_condition_on_marginal_df},
\begin{align*}
P\left(\max_{1\le i\le n}\bfX^{(i)}\le a_nf+b_n\right)&\le P\left(\max_{1\le i\le n} X_{t_0}^{(i)}\le a_n(t_0)f(t_0)+ b_n(t_0)\right)\\
&\to_{n\to\infty} G_{t_0}(f(t_0))
\le \varepsilon.
\end{align*}
As $\varepsilon>0$ was arbitrary, we have established
\[
\lim_{n\to\infty} P\left(\max_{1\le i\le n}\bfX^{(i)}\le a_nf+b_n\right)=0=P(\bfxi\le f)
\]
in that case, where $\inf_{t\in[0,1]}G_t(f(t))=0$ and, thus,
\eqref{eqn:fda_for_X} for each $f\in E[0,1]$. This completes the proof of
Theorem \ref{thm:equivalence_of_convergence}.
\end{proof}

We say that a stochastic process $\bfX=(X_t)_{t\in[0,1]}\in C[0,1]$ is in the
\emph{domain of attraction} of a max-stable process
$\bfxi=(\xi_t)_{t\in[0,1]}\in C[0,1]$, denoted by $\bfX\in\mathcal D(\bfxi)$,
if there are norming functions $a_n>0$, $b_n\in C[0,1]$, $n\in\N$, such that
\eqref{eqn:functional_domain_of_attraction_assumption} holds. The following
consequence of Theorem \ref{thm:equivalence_of_convergence} extends the
well-known characterization of \emph{multivariate} domain of attraction in
terms of weak convergence of the univariate margins together with weak
convergence of the copulas (\citet{gal78}, \citet{deheu78,deheu84} and
\citet{aulbf11}) to the functional space.

\begin{cor}\label{cor:equivalence_of_fda}
Let $\bfX=(X_t)_{t\in[0,1]}\in C[0,1]$ be a stochastic process with identical
continuous marginal df $F(x)=P(X_t\le x)$, $x\in\R$, $t\in[0,1]$, and let
$\bfxi=(\xi_t)_{t\in[0,1]}\in C[0,1]$ be an MSP with identical marginal df
$G$. Denote by $\bfU=(U_t)_{t\in[0,1]}:=(F(X_t))_{t\in[0,1]}$  the copula
process pertaining to $\bfX$. Then we have $\bfX\in\mathcal D(\bfxi)$ if and
only if $\bfU\in\mathcal D(\bfeta)$ and $F\in \mathcal D(G)$.
\end{cor}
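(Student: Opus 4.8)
The plan is to read Corollary~\ref{cor:equivalence_of_fda} off Theorem~\ref{thm:equivalence_of_convergence}: in the identical-margins case $F_t\equiv F$, $G_t\equiv G$ I want to show that condition~\eqref{eqn:final_equivalent_crucial_condition_on_marginal_df} can be arranged with (constant) norming functions exactly when $F\in\mathcal D(G)$ holds in the classical univariate sense, and that, once this is in place, \eqref{eqn:copula_domain_of_attraction_assumption} is literally the statement $\bfU\in\mathcal D(\bfeta)$. Here $\bfeta=(\eta_t)_{t\in[0,1]}$ is the SMSP attached to $\bfxi$ by $\eta_t:=\log(G(\xi_t))$, which lies in $C[0,1]$ by the remark following Theorem~\ref{thm:equivalence_of_convergence}, and $\xi_t=G^{-1}(\exp(\eta_t))$ a.s.

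For the implication $\bfX\in\mathcal D(\bfxi)\Rightarrow\bigl(\bfU\in\mathcal D(\bfeta)\text{ and }F\in\mathcal D(G)\bigr)$ I would proceed as follows. By definition $\bfX\in\mathcal D(\bfxi)$ comes with norming functions $a_n,b_n\in C[0,1]$, $a_n>0$, satisfying \eqref{eqn:final_equivalent_crucial_condition_on_marginal_df} and \eqref{eqn:functional_domain_of_attraction_assumption}. Evaluating \eqref{eqn:final_equivalent_crucial_condition_on_marginal_df} at the constant function $f\equiv x$ for any $x$ with $G(x)>0$ gives $n\bigl(F(a_n(0)x+b_n(0))-1\bigr)\to\log G(x)$, hence $F(\alpha_nx+\beta_n)^n\to G(x)$ with $\alpha_n:=a_n(0)$, $\beta_n:=b_n(0)$; together with the trivial convergence where $G(x)=0$ and the continuity of $G$ this yields $F\in\mathcal D(G)$. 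Then I would invoke the implication $\eqref{eqn:functional_domain_of_attraction_assumption}\Rightarrow\eqref{eqn:copula_domain_of_attraction_assumption}$ of Theorem~\ref{thm:equivalence_of_convergence} (applicable since \eqref{eqn:final_equivalent_crucial_condition_on_marginal_df} holds) to obtain \eqref{eqn:copula_domain_of_attraction_assumption} for precisely this $\bfeta$, i.e.\ $\bfU\in\mathcal D(\bfeta)$.

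For the converse I would start from $\bfU\in\mathcal D(\bfeta)$, i.e.\ \eqref{eqn:copula_domain_of_attraction_assumption}, and from $F\in\mathcal D(G)$, which supplies constants $\alpha_n>0$, $\beta_n$ with $F(\alpha_nx+\beta_n)^n\to G(x)$ for all $x$. By P\'olya's theorem this convergence is uniform in $x$, and expanding $\log(1+\eps)=\eps+O(\eps^2)$ as in the remark after \eqref{eqn:final_equivalent_crucial_condition_on_marginal_df} turns it into $n\bigl(F(\alpha_nx+\beta_n)-1\bigr)\to\log G(x)$ uniformly on each set $\{x:G(x)\ge\delta\}$, $\delta>0$. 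I would then choose the constant norming functions $a_n\equiv\alpha_n$, $b_n\equiv\beta_n$ and check \eqref{eqn:final_equivalent_crucial_condition_on_marginal_df}: for $f\in E[0,1]$ with $\delta:=\inf_{t\in[0,1]}G(f(t))>0$ the range $f([0,1])$ is bounded and contained in $\{G\ge\delta\}$, hence in a compact set $K\subseteq\{G>0\}$, so
\[
\sup_{t\in[0,1]}\bigl|n\bigl(F(\alpha_nf(t)+\beta_n)-1\bigr)-\log G(f(t))\bigr|\le\sup_{x\in K}\bigl|n\bigl(F(\alpha_nx+\beta_n)-1\bigr)-\log G(x)\bigr|\to0 .
\]
With \eqref{eqn:final_equivalent_crucial_condition_on_marginal_df} verified for this choice and for $\bfxi$ (equivalently $\xi_t=G^{-1}(\exp(\eta_t))$), the implication $\eqref{eqn:copula_domain_of_attraction_assumption}\Rightarrow\eqref{eqn:functional_domain_of_attraction_assumption}$ of Theorem~\ref{thm:equivalence_of_convergence} delivers \eqref{eqn:functional_domain_of_attraction_assumption}, that is $\bfX\in\mathcal D(\bfxi)$.

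I expect the only non-formal point to be the verification of \eqref{eqn:final_equivalent_crucial_condition_on_marginal_df} in the converse direction, which rests on the compactness of $f([0,1])$ together with the P\'olya-type local uniformity of univariate domain-of-attraction convergence; everything else is bookkeeping around Theorem~\ref{thm:equivalence_of_convergence}. I would also be careful that ``$\bfX\in\mathcal D(\bfxi)$'' is understood to include the marginal condition \eqref{eqn:final_equivalent_crucial_condition_on_marginal_df} (in line with the standing hypothesis of this section); if only \eqref{eqn:functional_domain_of_attraction_assumption} is assumed, the extraction step in the first implication should instead be routed through the convergence-to-types theorem applied at each fixed $t$ to produce admissible constant norming sequences, after which the argument is unchanged.
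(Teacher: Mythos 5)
Your proposal is correct and follows essentially the same route as the paper: the paper likewise reduces the corollary to Theorem~\ref{thm:equivalence_of_convergence} by showing that $F\in\mathcal D(G)$ yields condition~\eqref{eqn:final_equivalent_crucial_condition_on_marginal_df} via P\'olya's theorem ($\sup_{x\in\R}\abs{F^n(a_nx+b_n)-G(x)}\to 0$) and the Taylor expansion of $\log(1+\eps)$, and extracts $F\in\mathcal D(G)$ from $\bfX\in\mathcal D(\bfxi)$ through the univariate margins. Your treatment is somewhat more explicit (the compactness of $f([0,1])$ inside $\set{G\ge\delta}$ and the caveat about how $\mathcal D(\bfxi)$ is defined), but the substance is identical.
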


For a characterization of the condition $\bfU\in \mathcal D(\bfeta)$ in terms
of certain neighborhoods of generalized Pareto processes see Proposition
\ref{prop:characterization_of_D(eta)} below.

\begin{proof}[Proof of Corollary \ref{cor:equivalence_of_fda}]
The assumption $F\in\mathcal D(G)$ yields
$\sup_{x\in\R}|F^n(a_nx+b_n)-G(x)|\to_{n\to\infty}0$ for some sequence of
norming constants $a_n>0$, $b_n\in\R$, $n\in\N$. Taking logarithms and using
Taylor expansion of $\log(1+G(x))$ for $x\in[x_0,x_1]$ with $G(x_0)>0$ implies
\[
\sup_{x\in[x_0,x_1]}\abs{n(F(a_nx+b_n)-1)-\log(G(x))}\to_{n\to\infty}0
\]
and, thus, condition \eqref{eqn:final_equivalent_crucial_condition_on_marginal_df} is
satisfied. Corollary \ref{cor:equivalence_of_fda} is now an immediate
consequence of Theorem \ref{thm:equivalence_of_convergence} together with the
fact that the assumption $\bfX\in\mathcal D(\bfxi)$ implies in particular
that $F\in\mathcal D(G)$.
\end{proof}

We conclude this section with a short remark. Choose $f\in E[0,1]$ which is not the constant function zero.  If  $\bfeta\in C^-[0,1]$ is an SMSP, then  the univariate rv
\[
\eta_f:= \sup_{t\in [0,1]} \frac{\eta_t}{\abs{f(t)}},
\]
is by equation \eqref{eqn:df_of_smsp} negative exponential distributed with parameter $\norm f_D$. This explains why $P\left(\sup_{t\in[0,1]}\eta_t<0\right)=1$.

\section{\texorpdfstring{$\delta$}{Delta}-Neighborhood of a Generalized Pareto Process} \label{sec:Spectral_delta_Neighborhood_of_a_standard_GPP}
First we recall some facts from \citet{aulfaho11}. A univariate generalized
Pareto distribution (GPD) $W$ is simply given by $W(x)=1+\log(G(x))$,
$G(x)\ge 1/e$, where $G$ is a univariate extreme value distribution (EVD). It
was established by \citet{pick75} and \citet{balh74} that, roughly, the
maximum of $n$ iid univariate observations, linearly standardized, converges
in distribution to an EVD as $n$ increases if, and only if, the exceedances
above an increasing threshold follow a generalized Pareto distribution (GPD).
The multivariate extension is due to \citet{roott06}.

Theorem 2.2.5 in \citet{fahure10} characterizes \emph{$\delta$-neighborhoods} of univariate GPD via of a \emph{polynomial rate of convergence} of the maximum in an iid sample towards its limiting extreme value distribution. The \emph{multivariate} extension is given in Theorem 5.5.5 in \citet{fahure10}. In Proposition \ref{prop:characterization_rate_of_convergence} we will extend this characterization of a polynomial rate of convergence to \emph{function space}. We, therefore, have to introduce generalized Pareto processes and their spectral $\delta$-neighborhoods first.

It was observed by
\citet{buihz08} that a $d$-dimensional GPD $W$ with ultimately standard
Pareto margins can be represented in its upper tail  as
$W(\bfx)=P(U^{-1}\bfZ\le\bfx)$, $\bfzero\le \bfx_0\le \bfx\in\R^d$, where the
rv $U$ is uniformly on $(0,1)$ distributed and independent of the rv
$\bfZ=(Z_1,\dots,Z_d)$ with $0\le Z_i\le m$ for some $m\ge 1$ and $E(Z_i)=1$,
$1\le i\le d$. The following definition extends this approach to function
spaces.

\begin{defi}\upshape
Let $U$ be a rv which is uniformly distributed on $[0,1]$ and independent of
the generator process $\bfZ\in C[0,1]$ that is characterized by the two
properties
\begin{equation*}
0\le Z_t\le m\quad\mathrm{\ and\ }\quad E(Z_t)=1,\qquad t\in[0,1],
\end{equation*}
for some constant $m\ge 1$. Then the stochastic process
\[
\bfY:=\frac 1U \bfZ\ \in C^+[0,1]
\]
is a \emph{generalized Pareto process} (GPP) with ultimately standard Pareto margins \citet{buihz08}.
\end{defi}

Corollary 9.4.5 in \citet{dehaf06} implies that one can choose the generator process $\bfZ$ such that $\sup_{t\in[0,1]}Z_t=\mathit{const}$ a.s.

Characterizations of GPP in terms of a functional Peaks-Over-Threshold approach  are established in \citet{ferrdh12} and \citet{domri13}. Local asymptotic normality in $\delta$-neighborhoods of standard generalized Pareto processes was established in \citet{aulfa11}, and tests for a generalized Pareto process are investigated in \citet{aulfa12}.

The univariate margins $Y_t$ of $\bfY$ have ultimately standard Pareto tails:
\begin{align*}
P(Y_t\le x)&= P\left(\frac 1 x Z_t\le U\right)\\
&=\int_0^m P\left(\frac 1x z\le U\right)\,(P*Z_t)(dz)\\
&=1 - \frac 1 x \int_0^m z\,(P*Z_t)(dz)\\
&=1 - \frac 1 x E(Z_t)\\
&= 1- \frac 1 x,\qquad x\ge m,\,0\le t\le 1,
\end{align*}
where $P*Z_t$ denotes the distribution of $Z_t$.

Put $\bfV:=\max(-1/\bfY, M)$, where $M<0$ is an arbitrary constant, which
ensures that $\bfV>-\infty$. Then, as shown in Example 1 in \citet{aulfaho11},
\begin{equation*}
  P\left(\bfV\le f\right)=P\left(\sup_{t\in[0,1]}\left(\abs{f(t)}Z_t\right)\le U\right)
  =1-\norm f_D
\end{equation*}
for all $f\in\barE$ with $\norm f_\infty\le \min(1/m,\abs M)$, i.e., $\bfV$
has the property that its functional df is in its upper tail equal to
\begin{equation}\label{uppertail_funct_df_of_GPD}
W(f):=P\left(\bfV\le f\right)
=1+\log(G(f)),\quad f\in \barE,\,\norm f_\infty\le \min(1/m,\abs M),
\end{equation}
where $G(f)=P(\bfeta\le f)$ is the functional df of the MSP $\bfeta$ with
$D$-norm $\norm\cdot_D$ and generator $\bfZ$. This representation of the
upper tail of a GPP in terms of $1+\log(G)$ is in complete accordance with
the uni- and multivariate case \citep[see, for example,][Chapter
5]{fahure10}. We write $W=1+\log(G)$ in short notation and call $\bfV$ a GPP
as well, this time with ultimately uniform margins.

\begin{rem}\upshape
Due to representation \eqref{uppertail_funct_df_of_GPD}, the GPP $\bfV$ is
obviously in the functional domain of attraction of an SMSP $\bfeta$ with
$D$-norm $\norm\cdot_D$ and generator $\bfZ$: Take $a_n=1/n$ and $b_n=0$. We
have for $f\in E^-[0,1]$ and large enough $n\in\N$
\[
P\left(\bfV\le \frac 1n f\right)^n =\left(1-\frac 1n \norm f_D\right)^n \to_{n\to\infty} \exp\left(-\norm f_D\right)=P(\bfeta\le f).
\]
\end{rem}

The following result is a functional version of the well-known fact  that the
spectral df of a GPD rv is equal to a uniform df in a neighborhood of zero.

\begin{lemma}
We have for $f\in\barE$  with $0<\norm f_\infty\le m$ and some $s_0<0$
\[
W_f(s):=P\left(\bfV\le s\abs f\right)=1+s\norm f_D,\qquad s_0\le s\le 0.
\]
\end{lemma}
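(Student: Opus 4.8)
The statement is really just a reparametrization of the already–established identity $W(f)=P(\bfV\le f)=1-\norm f_D$ (equation \eqref{uppertail_funct_df_of_GPD}), valid whenever $\norm f_\infty\le\min(1/m,\abs M)$. So the plan is to apply that identity to the function $s\abs f$ in place of $f$, and then use positive homogeneity of the $D$-norm. First I would observe that $s\abs f\in\barE$ whenever $f\in\barE$ and $s\le 0$, since $s\abs f$ is again a bounded function with finitely many discontinuities attaining only non positive values. Then I would check the side condition: we need $\norm{s\abs f}_\infty=\abs s\,\norm f_\infty\le\min(1/m,\abs M)$. Since $\norm f_\infty\le m$ by hypothesis, it suffices to take $\abs s$ small enough, i.e. $s_0\le s\le 0$ with $s_0:=-\min(1/m,\abs M)/m<0$ (or simply $s_0:=-\min(1/m,\abs M)/\norm f_\infty$ if $f$ is not identically zero; if $f\equiv0$ the claim is trivial). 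With this choice of $s_0$, equation \eqref{uppertail_funct_df_of_GPD} gives $P(\bfV\le s\abs f)=1-\norm{s\abs f}_D$.

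The remaining step is to evaluate $\norm{s\abs f}_D$. Using the defining representation $\norm h_D=E\bigl(\sup_{t\in[0,1]}(\abs{h(t)}Z_t)\bigr)$ with $h=s\abs f$, we have $\abs{h(t)}=\abs s\,\abs{f(t)}$ for $s\le 0$, so $\norm{s\abs f}_D=\abs s\,E\bigl(\sup_{t}(\abs{f(t)}Z_t)\bigr)=\abs s\,\norm f_D=-s\,\norm f_D$. Substituting back yields $W_f(s)=P(\bfV\le s\abs f)=1-(-s)\norm f_D=1+s\norm f_D$ for $s_0\le s\le 0$, which is the claim.

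There is essentially no obstacle here — the result is a one-line consequence of \eqref{uppertail_funct_df_of_GPD} plus homogeneity of a norm — so the only thing to be careful about is bookkeeping: making sure $s\abs f$ stays in the class $\barE$ on which \eqref{uppertail_funct_df_of_GPD} was proved, and pinning down an explicit admissible $s_0$ from the constraint $\norm f_\infty\le m$ together with the threshold $\min(1/m,\abs M)$ appearing in that equation. One should also note that the constant $M<0$ from the definition of $\bfV$ is free, so $\abs M$ may be taken large and the binding constraint is $\abs s\,\norm f_\infty\le 1/m$; combined with $\norm f_\infty\le m$ this permits any $s_0\ge -1/m^2$, and in particular a fixed $s_0<0$ independent of $f$ works for all $f$ in the stated range.
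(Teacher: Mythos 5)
Your proof is correct, and since the paper states this lemma without proof, your one-line argument---apply the already-established identity $W(f)=1-\norm f_D$ from \eqref{uppertail_funct_df_of_GPD} to $s\abs f$ and use homogeneity of the $D$-norm, with $s_0$ chosen so that $\abs s\,\norm f_\infty\le\min(1/m,\abs M)$---is exactly the intended derivation. Your bookkeeping on the admissible $s_0$ (a single $s_0=-\min(1/m,\abs M)/m$ works uniformly over all $f$ with $\norm f_\infty\le m$) is the only nontrivial point, and you handle it correctly.
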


The next result is established in
\citet{aulfaho11}. Let $\bfU$ be a copula process and let $\bfeta$ be an SMSP with functional df $P(\bfeta\le f)=\exp\left(-\norm f_D\right)$, $f\in E^-[0,1]$.

\begin{prop}\label{prop:characterization_of_D(eta)}
The property $\bfU\in \mathcal{D}(\bfeta)$ is equivalent to the condition
\begin{equation}\label{eqn:tail_equivalence_of_H_and_W}
\lim_{s\uparrow 0} \frac{1-H_f(s)}{1-W_f(s)}=1, \qquad f\in\barE,
\end{equation}
i.e., the spectral df $H_f(s)=P(\bfU-1\le s\abs f),\;s\le 0,$ of $\bfU-1$ is for each $f\in E^-[0,1]$ with $\norm f_\infty>0$
\textit{tail equivalent} to that of the GPD $W_f(s)=1+s\norm f_D$, $s_0\le s\le 0$.
\end{prop}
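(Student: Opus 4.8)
The plan is to characterize $\bfU\in\mathcal D(\bfeta)$ in terms of the one-dimensional spectral distribution functions $H_f$ and then exploit the preceding \emph{Lemma} identifying $W_f(s)=1+s\norm f_D$ near $0$. Recall that $\bfU\in\mathcal D(\bfeta)$ means $P\left(n\max_{1\le i\le n}(\bfU^{(i)}-1)\le g\right)\to P(\bfeta\le g)=\exp(-\norm g_D)$ for all $g\in E^-[0,1]$. The key reduction is that it suffices to check this convergence along rays $g=s\abs f$, $s<0$, $f\in\barE$: indeed, every $g\in E^-[0,1]$ with $\sup_t g(t)<0$ is of this form (take $f=-g$, $s=-1$), and the passage from $\sup_t g(t)<0$ to arbitrary $g\in E^-[0,1]$ is exactly the $\varepsilon$-sandwiching argument already carried out in the proof of Theorem \ref{thm:equivalence_of_convergence} (replace $g$ by $g\pm\varepsilon$, use continuity of $\norm\cdot_D$). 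So the first step is: $\bfU\in\mathcal D(\bfeta)$ iff for every $f\in\barE$ and every $s<0$,
\[
P\left(n\max_{1\le i\le n}(\bfU^{(i)}-1)\le s\abs f\right)\to\exp\left(-\abs s\,\norm f_D\right).
\]

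Next I would rewrite the left-hand side in terms of $H_f$. Since the events $\set{U_t^{(i)}-1\le (s/n)\abs{f(t)}\ \fat}$ are, for each fixed $i$, the event $\set{\bfU^{(i)}-1\le (s/n)\abs f}$, and the copies are iid,
\[
P\left(n\max_{1\le i\le n}(\bfU^{(i)}-1)\le s\abs f\right)=P\left(\bfU-1\le \tfrac sn\abs f\right)^n=H_f\!\left(\tfrac sn\right)^n.
\]
Hence $\bfU\in\mathcal D(\bfeta)$ is equivalent to $H_f(s/n)^n\to\exp(-\abs s\norm f_D)$ for all $f\in\barE$, $s<0$. Taking logarithms, this says $n\log H_f(s/n)\to -\abs s\norm f_D$, and since $H_f(s/n)\to H_f(0^-)=1$ (because $\bfU-1\le\bfzero$ a.s. and $H_f$ is left-continuous at $0$), this is in turn equivalent to $n\,(1-H_f(s/n))\to\abs s\norm f_D$, i.e. to $1-H_f(s/n)\sim \abs s\norm f_D/n$ as $n\to\infty$. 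This is a statement about the behaviour of $1-H_f$ as its argument tends to $0$ from below along the sequence $s/n$; by a standard monotonicity/Landau argument (the sequential limit for all $s<0$ upgrades to the continuous limit because $1-H_f$ is monotone), it is equivalent to
\[
\lim_{s\uparrow 0}\frac{1-H_f(s)}{\abs s}=\norm f_D,\qquad f\in\barE.
\]

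Finally I would bring in the \emph{Lemma}: for $f$ with $\norm f_\infty\le m$ we have $1-W_f(s)=-s\norm f_D=\abs s\norm f_D$ for $s_0\le s\le 0$, so the displayed limit is precisely $\lim_{s\uparrow0}(1-H_f(s))/(1-W_f(s))=1$, which is \eqref{eqn:tail_equivalence_of_H_and_W} for such $f$. For general $f\in\barE$ one reduces to the bounded case by homogeneity: both $\norm\cdot_D$ and $H$ scale so that replacing $f$ by $\lambda f$ only rescales $s$, and $1-W_{\lambda f}(s)/(1-W_f(\lambda s))$ stays $1$; thus tail equivalence for all $f\in\barE$ is equivalent to tail equivalence for all $f$ with small sup-norm, which closes the loop. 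The main obstacle I anticipate is the care needed in the two ``upgrade'' steps — first, the reduction from rays $g=s\abs f$ back to all $g\in E^-[0,1]$ (this needs the $\varepsilon$-sandwich plus continuity of $\norm\cdot_D$, as in Theorem \ref{thm:equivalence_of_convergence}), and second, passing from the sequential limits along $s/n$ (all $s<0$) to the genuine one-sided limit $s\uparrow0$, which relies on monotonicity of $s\mapsto H_f(s)$ together with a squeezing argument between $\lfloor\cdot\rfloor$ and $\lceil\cdot\rceil$; everything else is bookkeeping with the identity $P(n\max_{i\le n}(\bfU^{(i)}-1)\le s\abs f)=H_f(s/n)^n$ and the elementary equivalence $n(1-H_f(s/n))\to c \iff H_f(s/n)^n\to e^{-c}$.
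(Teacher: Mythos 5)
The paper does not actually prove this proposition --- it is imported verbatim from \citet{aulfaho11} --- so there is no in-text argument to compare yours against; judged on its own, your proof is correct and is essentially the standard route. The chain
\begin{align*}
\bfU\in\mathcal D(\bfeta)
&\iff H_f\left(\tfrac sn\right)^n\to_{n\to\infty}\exp\left(-\abs s\norm f_D\right)\ \text{for all } s<0,\ f\in\barE\\
&\iff n\left(1-H_f\left(\tfrac sn\right)\right)\to_{n\to\infty}\abs s\norm f_D\ \text{for all } s<0,\ f\in\barE\\
&\iff \lim_{u\uparrow0}\frac{1-H_f(u)}{\abs u}=\norm f_D\ \text{for all } f\in\barE,
\end{align*}
combined with $1-W_f(u)=\abs u\norm f_D$ near $0$, is exactly what is needed, and the monotonicity squeeze upgrading the sequential limits along $u=s/n$ to the genuine limit $u\uparrow0$ is the one step that genuinely has to be argued; you identify and execute it correctly. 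Three small remarks. First, the reduction to rays is lossless and needs no $\varepsilon$-sandwich at all: every $g\in E^-[0,1]$ equals $s\abs f$ with $s=-1$ and $f=g$ (not $f=-g$, which would leave $\barE$), so the restriction $\sup_{t\in[0,1]}g(t)<0$ and the appeal to the sandwich argument of Theorem \ref{thm:equivalence_of_convergence} never enter. Second, your parenthetical ``$H_f(s/n)\to H_f(0^-)=1$ because $\bfU-1\le\bfzero$ a.s.'' is not quite right as stated, since $H_f(0^-)=1$ amounts to $P\left(\sup_{t}\left(U_t-1\right)/\abs{f(t)}<0\right)=1$, which is not immediate for an arbitrary copula process; but the claim is also not needed, because in one direction $H_f(s/n)^n\to\exp\left(-\abs s\norm f_D\right)>0$ already forces $H_f(s/n)\to1$, and in the other direction $n\left(1-H_f(s/n)\right)\to\abs s\norm f_D<\infty$ does. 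Third, the closing homogeneity step is superfluous: the proposition defines $W_f=1+\log(G_f)$ with $G(\cdot)=\exp\left(-\norm\cdot_D\right)$, so $1-W_f(s)=\abs s\norm f_D$ holds for every $f\in\barE$ by definition, not only for $f$ with small sup-norm.
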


This characterization of the domain of attraction of an SMSP in terms of a
certain GPP suggests to focus on the following standard case. Recall that
Section~\ref{sec:characterization_of_fmda} justified to consider SMSPs in
place of general MSPs.
\begin{defi}\upshape
A stochastic process $\bfV\in \barC$ is a \emph{standard generalized Pareto
process} (SGPP), if there exists a $D$-norm $\norm\cdot_D$ on $E[0,1]$ and
some $c_0>0$ such that
\[
 P(\bfV\le f)=1-\norm f_D
\]
for all $f\in\barE$ with $\norm f_\infty\le c_0$.
\end{defi}

The same arguments as at the end of Section
\ref{sec:characterization_of_fmda} provide the upper tail of the df of the rv
\[
V_f :=  \sup_{t\in[0,1]} \frac{V_t}{\abs{f(t)}}
\]
if $\bfV\in C^-[0,1]$ is an SGPP and $f\in E[0,1]$ is not the constant
function zero. We obtain
\[
P\left(V_f>x\right)=\norm f_D \abs x,\qquad -1\le x\le 0,
\]
if $\norm f_\infty\le c_0$, i.e., $V_f$ follows in its upper tail, precisely
on $(-1,0)$, a uniform distribution.

Using the spectral decomposition of a stochastic process in $\barC$, we can
easily extend the definition of a spectral $\delta$-neighborhood of a
multivariate GPD as in \citet[Section 5.5]{fahure10} to the spectral
$\delta$-neighborhood of an SGPP. Denote by
$E^-_1[0,1]=\set{f\in\barE:\,\norm f_\infty=1}$ the unit sphere in $\barE$.

\begin{defi}\upshape
We say that a stochastic process $\bfY\in\barC$ belongs to the
\textit{spectral $\delta$-neighborhood} of the SGPP $\bfV$ for some $\delta
\in(0,1]$, if we have uniformly for $f\in E^-_1[0,1]$ the expansion
\begin{align*}
  1-P(\bfY\le c f)&=(1-P(\bfV\le cf))\left(1+O\left(c^\delta\right)\right)\\
  &=c\norm f_D \left(1+O\left(c^\delta\right)\right)
\end{align*}
as $c\downarrow 0$.
\end{defi}

An SMSP is, for example, in the spectral $\delta$-neighborhood of the
corresponding GPP with $\delta=1$.

The following result extends Theorem 5.5.5 in \citet{fahure10} on the rate of
convergence of multivariate extremes. It shows that $\delta$-neighborhoods
collect, roughly, all processes which have a polynomial rate of convergence
towards an SMSP.

\begin{prop}\label{prop:characterization_rate_of_convergence}
Let $\bfY$ be a stochastic process in $\barC$, $\bfV$ an SGPP with $D$-norm
$\norm\cdot_D$ and $\bfeta$ an SMSP with $P(\bfeta\le f)=\exp\left(-\norm f_D\right)$, $f\in E^-[0,1]$.
\begin{itemize}
  \item[\textit{(i)}] Suppose that $\bfY$ is in the spectral
      $\delta$-neighborhood of $\bfV$ for some $\delta\in(0,1]$. Then we
      have
      \[
        \sup_{f\in\barE}\abs{P\left(\bfY\le \frac f n\right)^n-P(\bfeta\le f)}=O\left(n^{-\delta}\right).
      \]
  \item[\textit{(ii)}] Suppose that $H_f(c)=P(\bfY\le c\abs f)$ is
      differentiable with respect to $c$ in a left neighborhood of $0$ for
      any $f\in E^-_1[0,1]$, i.e., $h_f(c):=(\partial/\partial c) H_f(c)$
      exists for $c\in(-\varepsilon,0)$ and any $f\in E^-_1[0,1]$. Suppose,
      moreover, that $H_f$ satisfies the von Mises condition
      \[
        \frac{-ch_f(c)}{1-H_f(c)}=:1+r_f(c)\to_{c\uparrow 0} 1,\qquad f\in E^-_1[0,1],
      \]
      with remainder term $r_f$ satisfying
      \[
        \sup_{f\in E^-_1[0,1]}\abs{\int_c^0\frac{r_f(t)} t\,dt}\to_{c\uparrow 0} 0.
      \]
      If
      \[
        \sup_{f\in\barE}\abs{P\left(\bfY\le \frac f n\right)^n-P(\bfeta\le f)}=O\left(n^{-\delta}\right)
      \]
      for some $\delta\in(0,1]$, then $\bfY$ is in the spectral
      $\delta$-neighborhood of the GPP $\bfV$.
 \end{itemize}
 \end{prop}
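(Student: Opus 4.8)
The plan is to reduce both parts to the univariate machinery behind Theorem 5.5.5 in \citet{fahure10}, applied uniformly over the unit sphere $E^-_1[0,1]$, and then to exploit the homogeneity of the $D$-norm to pass from $f\in E^-_1[0,1]$ to arbitrary $f\in\barE$. For part (i), I would first write, for $f\in\barE$, $f\ne 0$, the scaling $f/n = (\norm f_\infty/n)\,\tilde f$ with $\tilde f := f/\norm f_\infty\in E^-_1[0,1]$, so that $P(\bfY\le f/n) = 1 - (\norm f_\infty/n)\norm{\tilde f}_D(1+O((\norm f_\infty/n)^\delta))$ by the spectral $\delta$-neighborhood expansion, the $O$-term being uniform in $\tilde f$. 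Raising to the $n$-th power and comparing with $P(\bfeta\le f) = \exp(-\norm f_D) = \exp(-\norm f_\infty\norm{\tilde f}_D)$, the required $O(n^{-\delta})$ bound follows from the elementary inequality $\abs{(1-x/n)^n - e^{-x}} \le C x^2/n$ for $0\le x\le n$ together with $\abs{(1-x(1+\rho)/n)^n-(1-x/n)^n}\le x\abs\rho e^{-x/2}\cdot(\text{const})$ for $\abs\rho$ small; one handles small $x=\norm f_\infty\norm{\tilde f}_D$ directly and notes that for large $\norm f_\infty$ both $P(\bfY\le f/n)^n$ and $P(\bfeta\le f)$ are uniformly close to their values at the truncation $\max(f,-c_0 n\,\cdot)$-type argument, or more simply observes that $\norm{\tilde f}_D\ge \norm{\tilde f}_\infty\cdot\norm 1_D^{-1}\cdot(\dots)$ is bounded below so the whole expression is exponentially small once $\norm f_\infty$ exceeds a fixed multiple of $\log n$, and $n^{-\delta}$ dominates. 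The one genuinely delicate point is that the spectral expansion is only assumed on a left neighborhood $c\in(0,c_0)$ independent of $f$, so for $\norm f_\infty/n \ge c_0$ one needs the crude bound; since $\delta\le 1$, $n^{-\delta}$ is the worst term and the large-$\norm f_\infty$ regime contributes only exponentially small errors, so this is routine.

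For part (ii), the task is the converse: from the global rate $\sup_{f\in\barE}\abs{P(\bfY\le f/n)^n - P(\bfeta\le f)} = O(n^{-\delta})$ and the von Mises condition, recover the spectral $\delta$-neighborhood expansion. I would restrict to $f\in E^-_1[0,1]$ and set $s = f$ scaled, working with $H_f(c) = P(\bfY\le c\abs f)$, $c\in(-\varepsilon,0)$. The von Mises representation $-ch_f(c)/(1-H_f(c)) = 1+r_f(c)$ integrates to $1-H_f(c) = (1-H_f(c_1))\exp\!\big(-\int_{c}^{c_1}\frac{1+r_f(t)}{t}\,dt\big) = K_f\,\abs c\,\exp\!\big(\int_c^0\frac{r_f(t)}{t}\,dt\big)$ for an appropriate constant $K_f$ and reference point, i.e.\ $1-H_f(c) = K_f\abs c\,(1+\rho_f(c))$ with $\sup_f\abs{\rho_f(c)}\to 0$ as $c\uparrow 0$ by the assumed uniform control on $\int_c^0 r_f(t)/t\,dt$. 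It remains to identify $K_f$ with $\norm f_D$ and to upgrade the $o(1)$ error to $O(c^\delta)$. The identification of $K_f$ comes from feeding $1-H_f(c) = K_f\abs c(1+o(1))$ back through the max-stable limit exactly as in part (i): $P(\bfY\le f/n)^n\to\exp(-K_f\norm f_\infty) = \exp(-K_f)$ (on $E^-_1$), which must equal $P(\bfeta\le f) = \exp(-\norm f_D)$, forcing $K_f = \norm f_D$.

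The main obstacle, and the step I would spend the most care on, is promoting the qualitative $o(1)$ remainder to the quantitative $O(c^\delta)$ in the $\delta$-neighborhood expansion. The point is that a convergence rate $O(n^{-\delta})$ for the discretized maxima is \emph{equivalent} to an $O(c^\delta)$ tail expansion, and this is precisely the content of Theorem 5.5.5 in \citet{fahure10} in the multivariate case. I would mimic that argument: fix $f\in E^-_1[0,1]$, and for each small $c>0$ choose $n = n(c) = \lfloor c_0/c\rfloor$ (or the nearest integer making $c\approx c_0/n$), so that $P(\bfY\le (c_0 f)/n)^n = (1 - \norm{c_0 f}_D/(c_0 n)\cdot n \cdot(\dots))^{\dots}$ — more carefully, evaluate the hypothesis at the function $nc\,f\in\barE$ to get $\abs{P(\bfY\le c f)^n - \exp(-nc\norm f_D)} = O(n^{-\delta})$, take logarithms using $\log(1-x) = -x + O(x^2)$ (legitimate since $P(\bfY\le cf)\to 1$), divide by $n$, and read off $1-P(\bfY\le cf) = c\norm f_D + O(n^{-1-\delta}) + O((1-P(\bfY\le cf))^2) = c\norm f_D(1+O(c^\delta))$ after absorbing the quadratic term via the already-established $1-H_f(c)=O(c)$. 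The uniformity over $f\in E^-_1[0,1]$ is inherited directly from the uniformity in the hypothesis $\sup_{f\in\barE}$ and in the von Mises remainder bound. Finally, homogeneity $\norm{cf}_D = c\norm f_D$ and the scaling $f\mapsto f/\norm f_\infty$ extend the expansion from the unit sphere to all of $\barE$, completing the proof.
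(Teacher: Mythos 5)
Your proposal is correct and follows essentially the same route as the paper: reduce the supremum over $\barE$ to $\sup_{f\in E^-_1[0,1]}\sup_{c<0}$ via the spectral decomposition $f=\norm f_\infty\cdot f/\norm f_\infty$ and homogeneity of the $D$-norm, then run the univariate rate-of-convergence argument of Theorem 1.1 in \citet{falr02} uniformly in the direction $f$. The only difference is that the paper simply cites \citet{falr02} for that second step, whereas you sketch it explicitly (elementary $(1-x/n)^n$ versus $e^{-x}$ bounds for (i); integrating the von Mises condition and evaluating the hypothesis at $nc\abs f$ with $n\approx 1/\abs c$ for (ii)).
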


\begin{proof}
Note that
\begin{align*}
  &\sup_{f\in\barE}\abs{P\left(\bfY\le \frac f n\right)^n-P(\bfeta\le f)}\\
  &= \sup_{f\in\barE}\abs{P\left(\bfY\le \frac{\norm f_\infty} n \frac f{\norm f_\infty}\right)^n- P\left(\bfeta\le \norm f_\infty  \frac f{\norm f_\infty}\right)}\\
  &=\sup_{c < 0}\sup_{f\in E^-_1[0,1]}\abs{P\left(\bfY\le \frac cn\abs f\right)^n- P\left(\bfeta\le c\abs f\right)}\\
  &=\sup_{f\in E^-_1[0,1]} \sup_{c < 0}\abs{P\left(\bfY\le \frac cn\abs f\right)^n- P\left(\bfeta\le c\abs f\right)}.
\end{align*}
The assertion now follows by repeating the arguments in the proof of Theorem
1.1 in  \citet{falr02}, where the bivariate case has been established.
\end{proof}

\section{Distributional Differentiability of an SMSP}\label{sec:differentiability_smsp}

In this section, our aim is to establish the concept of \emph{distributional differentiability} of an SMSP $\bm\eta=(\eta_t)_{t\in[0,1]}$. In Proposition \ref{prop:differentiabiliy_of_smsp} we will prove the following result: Let $\bfZ=(Z_t)_{t\in[0,1]}\in C[0,1]$ be a generator process of $\bfeta$. Suppose that $Z'_t=(\partial/\partial t)Z_t$
exists for $t=t_0$ a.s. Then $(\eta_{t_0+h}-\eta_{t_0})/h$ converges in distribution to some rv on the real line as $h\to 0$ and we compute its df.

This is a first result on differentiability of max-stable processes. To the best of our knowledge, the question, under which conditions a max-stable process is differentiable at $t_0\in[0,1]$ a.s., is an open problem. If $\eta'_t=(\partial/\partial t)\eta_t$ actually exists at $t=t_0$ a.s., the distribution of $\eta'_{t_0}$ equals that of $-\eta_{t_0}\zeta_{t_0}$, where the rv $\zeta_{t_0}$ is independent of $\eta_{t_0}$ and has df $\mathbb F_{t_0}(x)=E\left(1_{\set{Z'_{t_0}\le x Z_{t_0}}}Z_{t_0}\right)$, $x\in\R$; see the discussion after Proposition \ref{prop:differentiabiliy_of_smsp}.

As an auxiliary result, we first compute the partial derivatives of a functional $D$-norm $\norm\cdot_D$. For this purpose, we need the following definition. Let $\mathcal X$ be a normed function space, and $J:\mathcal X\to\R$ a functional. The \emph{first variation} (or the \emph{G\^{a}teaux differential}) \emph{of $J$ at $u\in\mathcal X$ in the direction $v\in\mathcal X$} is defined as
\begin{equation*}
\nabla J(u)(v):=\lim_{\eps\to0}\frac{J(u+\eps v)-J(u)}{\eps}=\frac{d}{d\eps}J(u+\eps v)\Big|_{\eps=0}.
\end{equation*}
Moreover, the \emph{right-hand} (\emph{left-hand}) first variation of $J$ at $u$ in the direction $v$ is defined as
\begin{equation*}
\nabla^+ J(u)(v):=\lim_{\eps\downarrow0}\frac{J(u+\eps v)-J(u)}{\eps}\quad\text{and}\quad\nabla^- J(u)(v):=\lim_{\eps\downarrow0}\frac{J(u)-J(u-\eps v)}{\eps}.
\end{equation*}

Considering a $D$-norm $\norm\cdot_D$ a functional on the space $E[0,1]$, we can calculate the first variation of $\norm\cdot_D$.  The choice of the space $E[0,1]$ allows us the incorporation of the fidis and therefore yields the partial derivatives of a \emph{multivariate} $D$-norm. This finite-dimensional version of the following result has already been observed by \citet{einkraseg12}. Note that as a norm is a convex function, a multivariate $D$-norm $\norm{\bfx}_D$ is for almost every $\bfx<\bfzero\in\R^d$ continuously differentiable.

\begin{lemma}\label{lem:dnorm_firstvariation}
Let $\norm\cdot_D$ be a $D$-norm on the function space $E[0,1]$ with generator $\bm Z=(Z_t)_{t\in[0,1]}\in C[0,1]$. Let $t_0\in[0,1]$ and $1_{\{t_0\}}\in E[0,1]$ be the indicator function of the one point set $\{t_0\}$. Then for every $f\in E[0,1]$
\begin{align*}
\nabla^+\norm f_D\left(1_{\{t_0\}}\right)&=\lim_{\eps\downarrow0}\frac{\norm{f+\eps1_{\{t_0\}}}_D-\norm{f}_D}{\eps}\\
&=\begin{cases}-E\left(1_{\{\sup_{t\neq t_0}\abs{f(t)}Z_t<\abs{f(t_0)}Z_{t_0}\}}Z_{t_0}\right),\quad f(t_0)<0,\\ +E\left(1_{\{\sup_{t\neq t_0}\abs{f(t)}Z_t\leq\abs{f(t_0)}Z_{t_0}\}}Z_{t_0}\right),\quad f(t_0)\geq0,\end{cases}
\end{align*}
and
\begin{align*}
\nabla^-\norm f_D\left(1_{\{t_0\}}\right)&=\lim_{\eps\downarrow0}\frac{\norm{f}_D-\norm{f-\eps1_{\{t_0\}}}_D}{\eps}\\
&=\begin{cases}-E\left(1_{\{\sup_{t\neq t_0}\abs{f(t)}Z_t\leq\abs{f(t_0)}Z_{t_0}\}}Z_{t_0}\right),\quad f(t_0)\leq0,\\ +E\left(1_{\{\sup_{t\neq t_0}\abs{f(t)}Z_t<\abs{f(t_0)}Z_{t_0}\}}Z_{t_0}\right),\quad f(t_0)>0.\end{cases}
\end{align*}
\end{lemma}

\begin{proof}
Let $f\in E[0,1]$ with $f(t_0)>0$. We have
\begin{equation*}
\norm f_D=E\left(\sup_{t\in[0,1]}\abs{f(t)}Z_t\right).
\end{equation*}
First we calculate the right-hand first variation of $\norm\cdot_D$, assuming $f(t_0)\geq 0$. For $\eps>0$ there exists a disjoint decomposition of the underlying probability space $(\Omega,\mathcal A,P)$ via
\begin{equation*}
\Omega=A_1+A_2^{\eps}+A_3^{\eps}
\end{equation*}
with
\begin{align*}
A_1:&=\left\{\sup_{t\neq t_0}\abs{f(t)}Z_t\leq f(t_0)Z_{t_0}=\sup_{t\in[0,1]}\abs{f(t)}Z_t\right\},\\
A_2^{\eps}:&=\left\{f(t_0)Z_{t_0}<\sup_{t\neq t_0}\abs{f(t)}Z_t=\sup_{t\in[0,1]}\abs{f(t)}Z_t\leq (f(t_0)+\eps)Z_{t_0}\right\}\downarrow_{\eps\downarrow0}\emptyset\\
A_3^{\eps}:&=\left\{(f(t_0)+\eps)Z_{t_0}<\sup_{t\neq t_0}\abs{f(t)}Z_t=\sup_{t\in[0,1]}\abs{f(t)}Z_t\right\}.
\end{align*}
Therefore we obtain by the dominated convergence theorem
\begin{align*}
\nabla^+&\norm f_D\left(1_{\{t_0\}}\right)=\lim_{\eps\downarrow0}\frac{\norm{f+\eps1_{\{t_0\}}}_D-\norm{f}_D}{\eps}\\
=&\lim_{\eps\downarrow0}E\left(\frac1\eps\left(\max\left(\sup_{t\neq t_0}\abs{f(t)}Z_t,(f(t_0)+\eps)Z_{t_0}\right)-\sup_{t\in[0,1]}\abs{f(t)}Z_t\right)\right)\\
=&\lim_{\eps\downarrow0}E\left(\frac1\eps\Big((f(t_0)+\eps)Z_{t_0}-f(t_0)Z_{t_0}\Big)\cdot 1_{A_1}\right)\\
&+\lim_{\eps\downarrow0}E\left(\frac1\eps\Big((f(t_0)+\eps)Z_{t_0}-\sup_{t\neq t_0}\abs{f(t)}Z_t\Big)\cdot 1_{A_2^{\eps}}\right)\\
&+\lim_{\eps\downarrow0}E\left(\frac1\eps\Big(\sup_{t\neq t_0}\abs{f(t)}Z_t-\sup_{t\neq t_0}\abs{f(t)}Z_t\Big)\cdot 1_{A_3^{\eps}}\right)\\
=&E\left(Z_{t_0}\cdot1_{A_1}\right)
\end{align*}
since the second summand after the second to last equality vanishes as
\begin{align*}
\frac1\eps\Big((f(t_0)+\eps)Z_{t_0}-\sup_{t\neq t_0}\abs{f(t)}Z_t\Big)\cdot 1_{A_2^{\eps}}&<\frac1\eps\Big((f(t_0)+\eps)Z_{t_0}-f(t_0)Z_{t_0}\Big)\cdot 1_{A_2^{\eps}}\\
&\to_{\eps\downarrow0}Z_{t_0}\cdot 1_{\emptyset}=0.
\end{align*}
Note that
\begin{equation*}
\sup_{t\neq t_0}\abs{f(t)}Z_t\leq f(t_0)Z_{t_0}\iff \sup_{t\in[0,1]}\abs{f(t)}Z_t=f(t_0)Z_{t_0}.
\end{equation*}
The case $f(t_0)\leq0$ works similarly. In order to calculate the left-side first variation of $\norm\cdot_D$, assuming $f(t_0)>0$, we find for $\eps>0$ a disjoint decomposition of $(\Omega,\mathcal A,P)$ via
\begin{equation*}
\Omega=B_1^{\eps}+B_2^{\eps}+B_3
\end{equation*}
with
\begin{align*}
B_1^{\eps}:&=\left\{\sup_{t\neq t_0}\abs{f(t)}Z_t< (f(t_0)-\eps)Z_{t_0}\right\}\uparrow_{\eps\downarrow0}\left\{\sup_{t\neq t_0}\abs{f(t)}Z_t< f(t_0)Z_{t_0}\right\}=:B_1,\\
B_2^{\eps}:&=\left\{(f(t_0)-\eps)Z_{t_0}\leq\sup_{t\neq t_0}\abs{f(t)}Z_t<(f(t_0))Z_{t_0}\right\}\downarrow_{\eps\downarrow0},\emptyset\\
B_3:&=\left\{f(t_0)Z_{t_0}\leq\sup_{t\neq t_0}\abs{f(t)}Z_t\right\}.
\end{align*}
Hence we obtain again by the dominated convergence theorem
\begin{align*}
\nabla^-&\norm f_D\left(1_{\{t_0\}}\right)=\lim_{\eps\downarrow0}\frac{\norm{f}_D-\norm{f-\eps1_{\{t_0\}}}_D}{\eps}\\
=&\lim_{\eps\downarrow0}E\left(\frac1\eps\left(\sup_{t\in[0,1]}\abs{f(t)}Z_t-\max\left(\sup_{t\neq t_0}\abs{f(t)}Z_t,(f(t_0)-\eps)Z_{t_0}\right)\right)\right)\\
=&\lim_{\eps\downarrow0}E\left(\frac1\eps\Big(f(t_0)Z_{t_0}-(f(t_0)-\eps)Z_{t_0}\Big)\cdot 1_{B^{\eps}_1}\right)\\
&+\lim_{\eps\downarrow0}E\left(\frac1\eps\Big(f(t_0)Z_{t_0}-\sup_{t\neq t_0}\abs{f(t)}Z_t\Big)\cdot 1_{B_2^{\eps}}\right)\\
&+\lim_{\eps\downarrow0}E\left(\frac1\eps\Big(\sup_{t\neq t_0}\abs{f(t)}Z_t-\sup_{t\neq t_0}\abs{f(t)}Z_t\Big)\cdot 1_{B_3}\right)\\
=&E\left(Z_{t_0}\cdot1_{B_1}\right)
\end{align*}
since the second summand after the second last equality vanishes by the same argument as above. The case $f(t_0)<0$ works similarly.
\end{proof}

The first variation (or the partial derivatives, respectively) of a $D$-norm emerge in the easiest case of the so-called \emph{prediction problem}, cf. \citet{wansto11}, \citet{domeyri12} and \citet{domeyimin12}. Suppose one knows the distribution of an SMSP $\bm\eta$ and the point $\{\eta_{t_0}=x\}$, $x<0$, has already been observed. We are interested in the conditional distribution of $\bm\eta$, given $\{\eta_{t_0}=x\}$. The finite-dimensional version of the following Lemma is part of Proposition 4.2 in \citet{domeyimin12}.

\begin{lemma}\label{lem:conditional_singlepoint_functional}
Let $\bm\eta=(\eta_t)_{t\in[0,1]}$ be an SMSP with $D$-norm $\norm\cdot_D$ generated by $\bm Z=(Z_t)_{t\in[0,1]}$. Choose an aribtrary $t_0\in[0,1]$. Then for every $f\in E^-[0,1]$ with $f(t_0)=0$ and almost all $x<0$
\begin{equation*}
P\left(\bm\eta\leq f\big|\eta_{t_0}=x\right)=\exp\left(-\left(x+\norm{f+x1_{\{t_0\}}}_D\right)\right)\cdot E\left(1_{\{\sup_{t\in[0,1]}\abs{f(t)}Z_t\leq\abs{x}Z_{t_0}\}}Z_{t_0}\right).
\end{equation*}
\end{lemma}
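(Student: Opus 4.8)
The plan is to obtain the conditional distribution as a Radon–Nikodym derivative, writing
\[
P\left(\bm\eta\le f\mid \eta_{t_0}=x\right)=\frac{\partial}{\partial x}\,P\left(\bm\eta\le f,\ \eta_{t_0}\le x\right)\Big/\frac{\partial}{\partial x}\,P(\eta_{t_0}\le x),
\]
valid for almost all $x<0$ since both numerator and denominator are monotone in $x$ and hence differentiable a.e. The denominator is immediate: $P(\eta_{t_0}\le x)=\exp(x)$, so its derivative is $\exp(x)$. The work is therefore to identify the numerator. Here the key observation is that, for $f\in\bar E^-[0,1]$ with $f(t_0)=0$, the event $\{\bm\eta\le f,\ \eta_{t_0}\le x\}$ is exactly $\{\bm\eta\le f+x1_{\{t_0\}}\}$, because lowering the constraint at the single point $t_0$ from $0$ to $x$ is the same as replacing $f$ by $f+x1_{\{t_0\}}\in E^-[0,1]$. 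Thus by \eqref{eqn:df_of_smsp},
\[
P\left(\bm\eta\le f,\ \eta_{t_0}\le x\right)=\exp\left(-\norm{f+x1_{\{t_0\}}}_D\right),\qquad x<0.
\]

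Next I would differentiate this expression in $x$. Since $x<0$ and $f(t_0)=0$, we have $f+x1_{\{t_0\}}=f-(-x)1_{\{t_0\}}$ with $-x>0$, so the relevant object is the left-hand first variation of the $D$-norm at the function $g:=f+x1_{\{t_0\}}$ in the direction $1_{\{t_0\}}$; note $g(t_0)=x<0$. By the chain rule,
\[
\frac{\partial}{\partial x}\,P\left(\bm\eta\le f,\ \eta_{t_0}\le x\right)=\exp\left(-\norm{f+x1_{\{t_0\}}}_D\right)\cdot\nabla^-\norm{\,\cdot\,}_D\left(f+x1_{\{t_0\}}\right)\left(1_{\{t_0\}}\right),
\]
at least at those $x$ where the two-sided derivative exists, which by convexity of the norm is almost every $x$. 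Applying Lemma~\ref{lem:dnorm_firstvariation} with $g=f+x1_{\{t_0\}}$, and using $\sgn(g(t_0))=\sgn(x)=-1$ together with the fact that $\abs{g(t)}=\abs{f(t)}$ for $t\neq t_0$ and $\abs{g(t_0)}=\abs{x}$, the variation equals
\[
\nabla^-\norm{g}_D\left(1_{\{t_0\}}\right)=-\,E\left(1_{\{\sup_{t\neq t_0}\abs{f(t)}Z_t<\abs{x}Z_{t_0}\}}Z_{t_0}\right).
\]

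Finally I would assemble the pieces. Dividing the numerator by the denominator $\exp(x)$ gives
\[
P\left(\bm\eta\le f\mid \eta_{t_0}=x\right)=\exp\left(-\left(x+\norm{f+x1_{\{t_0\}}}_D\right)\right)\cdot E\left(1_{\{\sup_{t\neq t_0}\abs{f(t)}Z_t<\abs{x}Z_{t_0}\}}Z_{t_0}\right),
\]
and it remains only to note that the event $\{\sup_{t\neq t_0}\abs{f(t)}Z_t<\abs{x}Z_{t_0}\}$ can be replaced by $\{\sup_{t\in[0,1]}\abs{f(t)}Z_t\le\abs{x}Z_{t_0}\}$ because $f(t_0)=0$ (so $\abs{f(t_0)}Z_{t_0}=0\le\abs{x}Z_{t_0}$ automatically), matching the claimed form; the distinction between strict and non-strict inequality is immaterial under $E(\cdots Z_{t_0})$ for almost all $x$. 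The main obstacle is the justification of the differentiation-under-the-integral / interchange of limit and expectation used to pass from the difference quotient of $P(\bm\eta\le f,\ \eta_{t_0}\le x)$ in $x$ to the first variation of the $D$-norm — but this is precisely what Lemma~\ref{lem:dnorm_firstvariation} supplies, since the difference quotient there is exactly $(\norm{g}_D-\norm{g-\eps1_{\{t_0\}}}_D)/\eps$ with $g=f+x1_{\{t_0\}}$, and the a.e. existence of the two-sided derivative in $x$ follows from monotonicity (or from convexity of $x\mapsto\norm{f+x1_{\{t_0\}}}_D$, which ensures $\nabla^+=\nabla^-$ off a countable set).
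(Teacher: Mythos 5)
Your proof is correct and follows essentially the same route as the paper: both identify $P(\bm\eta\le f,\eta_{t_0}\le x)=\exp\left(-\norm{f+x1_{\{t_0\}}}_D\right)$ and differentiate in $x$ via Lemma~\ref{lem:dnorm_firstvariation}, dividing by the density $\exp(x)$. The only (harmless) difference is that the paper takes the right-hand difference quotient and thus uses $\nabla^+$, which directly yields the non-strict inequality $\sup_{t}\abs{f(t)}Z_t\le\abs{x}Z_{t_0}$, whereas you use $\nabla^-$ and then correctly observe that the strict/non-strict discrepancy vanishes for all but countably many $x$.
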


\begin{proof}
The rv $\eta_{t_0}$ has Lebesgue-density $e^y$, $y\leq 0$. Therefore, we have
by basic rules of conditional distributions for almost all $x<0$
\begin{align*}
P\left(\bm\eta\leq f\big|\eta_{t_0}=x\right)&=\lim_{\eps\downarrow0}\frac{\eps^{-1}P(\bm\eta\leq f,\eta_{t_0}\in(x,x+\eps])}{\eps^{-1}P(\eta_{t_0}\in(x,x+\eps])}\\
&=\exp(-x)\lim_{\eps\downarrow0}\frac{P(\bm\eta\leq f,\eta_{t_0}\leq x+\eps)-P(\bm\eta\leq f,\eta_{t_0}\leq x)}{\eps}.
\end{align*}
Now define the function $g\in E^-[0,1]$ by $g(t)=f(t)$, $t\neq t_0$, and $g(t_0)= x$. Then we have by Lemma \ref{lem:dnorm_firstvariation}
\begin{align*}
P\left(\bm\eta\leq f\big|\eta_{t_0}=x\right)&=\exp(-x)\lim_{\eps\downarrow0}\frac{\exp\left(-\norm{g+\eps1_{\{t_0\}}}_D\right)-\exp\left(-\norm{g}_D\right)}{\eps}\\
&=-\exp(-x)\exp\left(-\norm{g}_D\right)\cdot\nabla^+\norm g_D\left(1_{\{t_0\}}\right)\\
&=\exp\left(-\left(x+\norm{f+x1_{\{t_0\}}}_D\right)\right)\cdot E\left(1_{\{\sup_{t\in[0,1]}\abs{g(t)}Z_t=\abs{x}Z_{t_0}\}}\right)\\
&=\exp\left(-\left(x+\norm{f+x1_{\{t_0\}}}_D\right)\right)\cdot E\left(1_{\{\sup_{t\in[0,1]}\abs{f(t)}Z_t\leq\abs{x}Z_{t_0}\}}\right).
\end{align*}
\end{proof}

The following lemma on the distribution of the increments of an SMSP can be shown by elementary calculations.

\begin{lemma}\label{lem:increments_smsp}
Consider an SMSP $\bm\eta=(\eta_t)_{t\in[0,1]}$ with generator process $\bm Z=(Z_t)_{t\in[0,1]}$ and choose arbitrary $s,t\in[0,1]$, $s\neq t$. Denote by $\norm\cdot_D$ the $D$-norm pertaining to $(\eta_s,\eta_t)$. Then for every $x\in\R$
\begin{align*}
&P\left(\eta_s-\eta_t\leq x\right)\\
&=\begin{cases}\displaystyle\int_{-\infty}^0\exp\left(-\norm{(x+y,y)}_D\right)\cdot E\left(1_{\{yZ_t\leq(x+y)Z_s\}}Z_t\right)~dy,&\; x<0 \\ \displaystyle\int_{-\infty}^{-x}\exp\left(-\norm{(x+y,y)}_D\right)\cdot E\left(1_{\{yZ_t\leq(x+y)Z_s\}}Z_t\right)~dy+1-\exp(-x),&\; x\geq0. \end{cases}
\end{align*}
\end{lemma}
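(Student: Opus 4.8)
The plan is to compute $P(\eta_s - \eta_t \le x)$ by conditioning on $\eta_t$ and applying Lemma~\ref{lem:conditional_singlepoint_functional}. Since $\eta_t$ is standard negative exponential with Lebesgue density $e^y$ on $y \le 0$, I write
\[
P(\eta_s - \eta_t \le x) = \int_{-\infty}^0 P(\eta_s \le x + y \mid \eta_t = y)\, e^y\, dy,
\]
being careful about the range of $y$ for which the event $\{\eta_s \le x+y\}$ is non-trivial: when $x+y \ge 0$ the conditional probability is $1$ (because $\eta_s < 0$ a.s.\ by the remark at the end of Section~\ref{sec:characterization_of_fmda}), and when $x+y < 0$ it is a genuine conditional df. For $x < 0$ we have $x + y < 0$ for all $y \le 0$, so the whole integral is of the second type. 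For $x \ge 0$, the threshold splits at $y = -x$: on $(-x, 0]$ the integrand is just $e^y$, contributing $\int_{-x}^0 e^y\, dy = 1 - \exp(-x)$, while on $(-\infty, -x]$ we again have a conditional df to evaluate.

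The key step is to identify $P(\eta_s \le x+y \mid \eta_t = y)$ for $x+y < 0$ using Lemma~\ref{lem:conditional_singlepoint_functional}. That lemma is stated for $f \in \bar E^-[0,1]$ with $f(t_0) = 0$; here I take $t_0 = t$ and the two-point index set $\{s,t\}$, with $f$ the function that is $x+y$ at $s$ and $0$ at $t$ (extended by $0$ elsewhere), and $x$ in the lemma replaced by $y$. Lemma~\ref{lem:conditional_singlepoint_functional} then gives
\[
P(\eta_s \le x+y \mid \eta_t = y) = \exp\!\left(-\left(y + \norm{(x+y,\,y)}_D\right)\right) \cdot E\!\left(1_{\{|x+y|\,Z_s \le |y|\,Z_t\}} Z_t\right),
\]
where I have written the two-dimensional $D$-norm of $(\eta_s,\eta_t)$ as $\norm{(\cdot,\cdot)}_D$ and used $f + y\,1_{\{t\}} = (x+y, y)$ on the coordinates $(s,t)$. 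Since $y \le 0$ and $x + y \le 0$ on this range, $|y| = -y$ and $|x+y| = -(x+y)$, and the indicator event $\{|x+y|Z_s \le |y|Z_t\}$ is exactly $\{y Z_t \le (x+y) Z_s\}$ after multiplying the inequality $-(x+y)Z_s \le -y Z_t$ through by $-1$ — which matches the indicator in the statement. Multiplying by the density $e^y$ cancels the $\exp(-y)$ factor, leaving the integrand $\exp(-\norm{(x+y,y)}_D)\, E(1_{\{yZ_t \le (x+y)Z_s\}} Z_t)$ exactly as claimed.

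The main obstacle I anticipate is bookkeeping rather than conceptual: one must verify that Lemma~\ref{lem:conditional_singlepoint_functional} applies for \emph{almost all} $y$ in the relevant range (the lemma carries an "almost all $x$" qualifier, which is harmless under the integral), and one must justify interchanging the limit defining the conditional probability with the integral over $y$ — this follows from dominated convergence, since the conditional dfs are bounded by $1$. A minor subtlety is the boundary value $x + y = 0$, i.e.\ $y = -x$ in the case $x \ge 0$: this is a single point, hence a null set, so it does not matter whether we count it in the first or second piece of the integral, and the stated upper limit $-x$ on the integral is consistent. Assembling the two cases then yields the displayed formula.
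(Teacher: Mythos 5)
Your proposal is correct and follows essentially the same route as the paper: condition on $\eta_t$ with density $e^y$, split the integral at $y=-x$ when $x\ge 0$ (where the conditional probability is $1$), and apply Lemma~\ref{lem:conditional_singlepoint_functional} with the two-point function so that the $\exp(-y)$ factor cancels against the density and the indicator $\{|x+y|Z_s\le|y|Z_t\}$ rewrites as $\{yZ_t\le(x+y)Z_s\}$. No gaps.
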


Note that the only possible point of discontinuity of the df $P(\eta_s-\eta_t\leq x)$ is $x=0$ where $P(\eta_s-\eta_t\leq 0)=\left(\norm{(1,1)}_D\right)^{-1}E\left(1_{\{Z_t\geq Z_s\}}Z_t\right)$.

The preceding lemma allows us to introduce the following differentiability concept. Firstly, we call a stochastic process $(X_t)_{t\in[0,1]}$ \emph{almost surely differentiable in $t_0\in[0,1]$}, if the difference quotient $(X_{t_0+h}-X_{t_0})/h$ converges almost surely to some rv $X_{t_0}'$ on the real line for $h\to0$.

Now different to that, we call a stochastic process $(Y_t)_{t\in[0,1]}$ \emph{differentiable in distribution in $t_0\in[0,1]$}, if the difference quotient $(Y_{t_0+h}-Y_{t_0})/h$ converges in distribution to some rv on the real line for $h\to0$.

Lastly, we call a stochastic process $\bm\xi=(\xi_t)_{t\in[0,1]}$ pathwise differentiable on $[0,1]$ if every path $\bm\xi(\omega)$ is differentiable on $[0,1]$.

\begin{prop}[Differentiability in Distribution of SMSP]\label{prop:differentiabiliy_of_smsp}
Let $\bfeta=(\eta_t)_{t\in[0,1]}$ be an SMSP with generator process $\bfZ=(Z_t)_{t\in[0,1]}\in C[0,1]$. Suppose that for some $t_0\in[0,1]$
\begin{equation}\label{eqn:local_differentiability_of_generator_process}
\frac{Z_{t_0+h}-Z_{t_0}}{h} \to_{h\to 0} Z'_{t_0}\quad\textrm{a.s.}
\end{equation}
Then we have for $x\neq 0$
\[
P\left(\frac{\eta_{t_0+h}-\eta_{t_0}}{h}\le x\right)\to_{h\to 0} H_{t_0}(x):= \int_{-\infty}^0 \exp(y) E\left( 1_{\left\{Z'_{t_0}\le -\tfrac xy Z_{t_0}\right\}}Z_{t_0}\right)\,dy.
\]
\end{prop}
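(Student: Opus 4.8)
The plan is to run everything through Lemma~\ref{lem:increments_smsp}, which already gives a closed form for the distribution of an increment, and then to pass to the limit in the resulting integral by dominated convergence. Since \eqref{eqn:local_differentiability_of_generator_process} is an almost sure limit in the continuous parameter $h$, it holds along every sequence $h_n\to0$, $h_n\ne0$, so it suffices to fix such a sequence and prove $P\big((\eta_{t_0+h_n}-\eta_{t_0})/h_n\le x\big)\to H(x)$. Consider first $h_n>0$. Then $\{(\eta_{t_0+h}-\eta_{t_0})/h\le x\}=\{\eta_{t_0+h}-\eta_{t_0}\le hx\}$, and since the bivariate margin $(\eta_{t_0+h},\eta_{t_0})$ has generator $(Z_{t_0+h},Z_{t_0})$ — so that its $D$-norm is $\norm{(a,b)}_{D_h}=E\big(\max(\abs{a}Z_{t_0+h},\abs{b}Z_{t_0})\big)$ — Lemma~\ref{lem:increments_smsp} with $s=t_0+h$, $t=t_0$ yields, for $x<0$,
\[
P\left(\frac{\eta_{t_0+h}-\eta_{t_0}}{h}\le x\right)=\int_{-\infty}^0\exp\left(-\norm{(hx+y,y)}_{D_h}\right)E\left(1_{\{yZ_{t_0}\le(hx+y)Z_{t_0+h}\}}Z_{t_0}\right)dy,
\]
and, for $x\ge0$, the same integrand over $(-\infty,-hx)$ plus the summand $1-\exp(-hx)$. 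As $-hx\to0$ and $1-\exp(-hx)\to0$, the task reduces to identifying the limit of this integral.

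For the outer integral I would use dominated convergence on $(-\infty,0)$: since $\norm{(hx+y,y)}_{D_h}\ge E(\abs{y}Z_{t_0})=\abs{y}$, the integrand is bounded by $e^y$, which is integrable, so it is enough to show that for Lebesgue-a.e.\ $y<0$ the integrand converges to $\exp(y)E\big(1_{\{\xi_{t_0}\le-(x/y)Z_{t_0}\}}Z_{t_0}\big)$. For the exponential factor: by continuity of the paths of $\bfZ$ we have $Z_{t_0+h}\to Z_{t_0}$ a.s., hence $\max(\abs{hx+y}Z_{t_0+h},\abs{y}Z_{t_0})\to\abs{y}Z_{t_0}$ a.s., bounded by the constant $(\abs{x}+\abs{y})m$, so dominated convergence gives $\norm{(hx+y,y)}_{D_h}\to\abs{y}$ and thus $\exp(-\norm{(hx+y,y)}_{D_h})\to e^y$ for each fixed $y<0$. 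For the expectation factor, dividing the defining inequality by $h>0$ rewrites the event as
\[
\{yZ_{t_0}\le(hx+y)Z_{t_0+h}\}=\left\{-y\,\frac{Z_{t_0+h}-Z_{t_0}}{h}\le xZ_{t_0+h}\right\};
\]
by \eqref{eqn:local_differentiability_of_generator_process} and $Z_{t_0+h}\to Z_{t_0}$ the left side tends a.s.\ to $-y\xi_{t_0}$ and the right side to $xZ_{t_0}$, so off the ``diagonal'' event $D_y:=\{-y\xi_{t_0}=xZ_{t_0}\}$ the indicator converges a.s.\ to $1_{\{-y\xi_{t_0}\le xZ_{t_0}\}}=1_{\{\xi_{t_0}\le-(x/y)Z_{t_0}\}}$ (dividing by $-y>0$); since $Z_{t_0}\le m$, a last dominated-convergence step then gives $E\big(1_{\{\cdots\}}Z_{t_0}\big)\to E\big(1_{\{\xi_{t_0}\le-(x/y)Z_{t_0}\}}Z_{t_0}\big)$, \emph{provided} $P(D_y)=0$.

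The main obstacle is precisely controlling $D_y$. For fixed $x\ne0$ it is $P$-null for all but countably many $y<0$: on $\{Z_{t_0}>0\}$ one has $D_y\cap\{Z_{t_0}>0\}=\{\xi_{t_0}/Z_{t_0}=-x/y\}\cap\{Z_{t_0}>0\}$, and since $y\mapsto-x/y$ is injective on $(-\infty,0)$ these events are pairwise disjoint, so at most countably many carry positive probability; on $\{Z_{t_0}=0\}$ the integrand $1_{\{\cdots\}}Z_{t_0}$ and its limit both vanish, so that part is irrelevant. Hence the integrand converges for Lebesgue-a.e.\ $y<0$, which is exactly what the outer dominated-convergence step requires, and we obtain $P\big((\eta_{t_0+h}-\eta_{t_0})/h\le x\big)\to H(x)$ for every $x\ne0$. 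The case $h_n<0$ is identical after applying Lemma~\ref{lem:increments_smsp} with $s=t_0$, $t=t_0+h$, so that $\{(\eta_{t_0+h}-\eta_{t_0})/h\le x\}=\{\eta_{t_0}-\eta_{t_0+h}\le-hx\}$; the same rearrangement, now dividing by $h<0$, produces the same limiting event $\{\xi_{t_0}\le-(x/y)Z_{t_0}\}$. The countability argument breaks only at $x=0$, where $D_y=\{\xi_{t_0}=0\}$ is independent of $y$; there $H$ may jump, and $\lim_h P\big((\eta_{t_0+h}-\eta_{t_0})/h\le0\big)$ lies between $E(1_{\{\xi_{t_0}<0\}}Z_{t_0})$ and $E(1_{\{\xi_{t_0}\le0\}}Z_{t_0})=H(0)$, with equality when $E(1_{\{\xi_{t_0}=0\}}Z_{t_0})=0$. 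Since $0$ is the only possible discontinuity of $H$, the difference quotient still converges in distribution to a random variable with distribution function $H$, and $P\big((\eta_{t_0+h}-\eta_{t_0})/h\le0\big)\to H(0)$ follows by sandwiching between $P(\le\pm\eps)$ whenever $H$ is continuous at $0$.
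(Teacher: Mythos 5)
Your proof is correct and follows essentially the same route as the paper: apply Lemma~\ref{lem:increments_smsp}, rewrite the indicator event as a difference quotient of the generator, and pass to the limit under the integral. You additionally justify the two dominated-convergence steps and the Lebesgue-a.e.\ convergence in $y$ of the indicator (the diagonal null-set argument and the possible jump of $H$ at $x=0$), points the paper's proof leaves implicit.
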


\begin{proof}
We have for $x\neq0$ and $h>0$ by Lemma \ref{lem:increments_smsp}
\begin{align*}
&P\left(\eta_{t_0+h}-\eta_{t_0}\le hx\right)\\
&= \int_{-\infty}^{-h\abs x} \exp\left(-\norm{(hx+y,y)}_{D(h)}\right)\cdot E\left(1_{\{yZ_{t_0}\leq(hx+y)Z_{t_0+h}\}}Z_{t_0}\right)\,dy + o(1)
\end{align*}
as $h\downarrow0$, where $\norm{\cdot}_{D(h)}$ is the $D$-norm generated by $(Z_{t_0+h},Z_{t_0})$. Now we obtain for almost all $y<-h\abs x$
\begin{align*}
&E\left( 1_{\left\{y Z_{t_0} \leq (hx+y) Z_{t_0+h}\right\}}Z_{t_0}\right)\\
&=E\left(1_{\left\{y \tfrac{Z_{t_0}-Z_{t_0+h}}{h} \leq x Z_{t_0+h}\right\}}Z_{t_0} \right)\\
&=E\left(1_{\left\{ \tfrac{Z_{t_0+h}-Z_{t_0}}{h} \leq -\tfrac xy Z_{t_0+h}\right\}}Z_{t_0} \right)\\
&\to_{h\downarrow 0} E\left( 1_{\left\{ Z'_{t_0} \le -\tfrac xy Z_{t_0}\right\}}Z_{t_0}\right)
\end{align*}
by condition \eqref{eqn:local_differentiability_of_generator_process} which implies the assertion if $h\downarrow0$. On the other hand, we have for $x\neq0$ and $h<0$ by Lemma \ref{lem:increments_smsp}, condition \eqref{eqn:local_differentiability_of_generator_process}, and the fact that $E(Z_{t_0})=1$
\begin{align*}
&P\left(\eta_{t_0+h}-\eta_{t_0}\ge hx\right)=1-P\left(\eta_{t_0+h}-\eta_{t_0}\leq hx\right)\\
&=1- \int_{-\infty}^{h\abs x} \exp\left(-\norm{(hx+y,y)}_{D(h)}\right)\cdot E\left(1_{\{yZ_{t_0}\leq(hx+y)Z_{t_0+h}\}}Z_{t_0}\right)\,dy + o(1)\\
&\to_{h\uparrow0}1-\int_{-\infty}^0 \exp(y) E\left( 1_{\left\{Z'_{t_0}\ge -\tfrac xy Z_{t_0}\right\}}Z_{t_0}\right)\,dy\\
&=1-\int_{-\infty}^0\exp(y)~dy+\int_{-\infty}^0 \exp(y) E\left( 1_{\left\{Z'_{t_0}\le -\tfrac xy Z_{t_0}\right\}}Z_{t_0}\right)\,dy\\
&=\int_{-\infty}^0 \exp(y) E\left( 1_{\left\{Z'_{t_0}\le -\tfrac xy Z_{t_0}\right\}}Z_{t_0}\right)\,dy.
\end{align*}
\end{proof}

Proposition \ref{prop:differentiabiliy_of_smsp} gives a sufficient condition on the differentiability in distribution of an SMSP. However, it does not imply differentiability of the path of $\bfeta$ at $t_0$. But if $\bfeta$ is differentiable at $t_0$ a.\,s., then $H_{t_0}$ is the df of the derivative $(\partial/\partial t)\eta_t$ of $\bfeta$ at $t=t_0$. We, therefore, denote by $\eta_{t_0}'$ a rv which follows the df $H_{t_0}$.

Suppose that $\bm Z$ is a.\,s. differentiable in $t_0$. Then
\[
\mathbb{F}_{t_0}(x):= E\left(1_{\left\{Z_{t_0}'\le x Z_{t_0}\right\}}Z_{t_0} \right),\qquad x\in\R,
\]
defines a common df on $\R$. Denote by $\zeta_{t_0}$ a rv which follows this df and which is independent of $\eta_{t_0}$. Then we obtain the equation
\[
H_{t_0}(x) =P\left(-\eta_{t_0} \zeta_{t_0}\le x\right),\qquad x\in\R,
\]
i.e., we have
\[
\eta_{t_0}'=_D -\eta_{t_0} \zeta_{t_0}.
\]
The pathwise derivative of $\bfeta$ at $t_0$, if it exists, coincides, therefore, in distribution with $-\eta_{t_0} \zeta_{t_0}$.

\begin{lemma}\label{lem:equality_of_expectations}
Suppose that $E\left(Z_{t_0}'\right)$ exists. Then the mean value of $\mathbb F_{t_0}$ exists as well and coincides with $E\left(Z_{t_0}'\right)$.
\end{lemma}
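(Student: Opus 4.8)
The plan is to identify $\mathbb F_{t_0}$ as the law of the ratio $Z_{t_0}'/Z_{t_0}$ under the tilted probability measure $Q$ on $(\Omega,\mathcal A)$ defined by $dQ=Z_{t_0}\,dP$: this is a genuine probability measure because $E(Z_{t_0})=1$, and $Q(Z_{t_0}=0)=E\bigl(1_{\{Z_{t_0}=0\}}Z_{t_0}\bigr)=0$, so the ratio is $Q$-a.s.\ well defined. Formally its mean is $E_Q(Z_{t_0}'/Z_{t_0})=E\bigl(Z_{t_0}'\,1_{\{Z_{t_0}>0\}}\bigr)$, and one then removes the last indicator. I would, however, carry this out rigorously through Fubini's theorem rather than manipulating the change of measure directly.

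Concretely, I would write $\mu$ for the mean value of $\mathbb F_{t_0}$ and use the standard representation
\[
\mu=\int_0^\infty\bigl(1-\mathbb F_{t_0}(x)\bigr)\,dx-\int_{-\infty}^0\mathbb F_{t_0}(x)\,dx.
\]
Using $E(Z_{t_0})=1$ one has $1-\mathbb F_{t_0}(x)=E\bigl(1_{\{Z_{t_0}'>xZ_{t_0}\}}Z_{t_0}\bigr)$ and $\mathbb F_{t_0}(x)=E\bigl(1_{\{Z_{t_0}'\le xZ_{t_0}\}}Z_{t_0}\bigr)$; since the integrands are nonnegative, Tonelli's theorem permits interchanging $\int_0^\infty dx$, resp.\ $\int_{-\infty}^0 dx$, with the expectation. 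On $\{Z_{t_0}>0\}$ one computes $\int_0^\infty 1_{\{Z_{t_0}'>xZ_{t_0}\}}\,dx=(Z_{t_0}')^+/Z_{t_0}$ and $\int_{-\infty}^0 1_{\{Z_{t_0}'\le xZ_{t_0}\}}\,dx=(Z_{t_0}')^-/Z_{t_0}$, while on $\{Z_{t_0}=0\}$ the weight $Z_{t_0}$ annihilates the contribution. This yields $\int_0^\infty(1-\mathbb F_{t_0}(x))\,dx=E\bigl((Z_{t_0}')^+1_{\{Z_{t_0}>0\}}\bigr)$ and $\int_{-\infty}^0\mathbb F_{t_0}(x)\,dx=E\bigl((Z_{t_0}')^-1_{\{Z_{t_0}>0\}}\bigr)$; both are dominated by $E\abs{Z_{t_0}'}<\infty$ (the existence of $E(Z_{t_0}')$ being understood as $Z_{t_0}'\in L^1$), so $\mu$ exists and equals $E\bigl(Z_{t_0}'1_{\{Z_{t_0}>0\}}\bigr)$.

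Finally I would discard $\{Z_{t_0}=0\}$: since the sample paths of $\bfZ$ are nonnegative, on this event $t_0$ is a minimizer of $t\mapsto Z_t$, and wherever the pathwise derivative exists (a.s., by hypothesis) it must vanish at a minimizer; for an interior $t_0$ this is immediate from the two-sided difference quotient in \eqref{eqn:local_differentiability_of_generator_process} being $\ge 0$ for $h\downarrow0$ and $\le 0$ for $h\uparrow0$. Hence $Z_{t_0}'=0$ a.s.\ on $\{Z_{t_0}=0\}$, so $E\bigl(Z_{t_0}'1_{\{Z_{t_0}>0\}}\bigr)=E(Z_{t_0}')$ and $\mu=E(Z_{t_0}')$. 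There is no serious obstacle here; the only points requiring care are the bookkeeping on the $P$-null-weight set $\{Z_{t_0}=0\}$ and checking that $\mu$ is in fact a finite number, both of which are handled above.
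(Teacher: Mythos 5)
Your proof is correct and follows essentially the same route as the paper: the tail-integral representation of the mean combined with Fubini/Tonelli applied to $E\bigl(1_{\{Z_{t_0}'>xZ_{t_0}\}}Z_{t_0}\bigr)$ and $E\bigl(1_{\{Z_{t_0}'\le xZ_{t_0}\}}Z_{t_0}\bigr)$. The only difference is that you explicitly track the event $\{Z_{t_0}=0\}$ and justify that $Z_{t_0}'$ vanishes there, a point the paper's computation passes over silently; this is added care rather than a different argument.
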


\begin{proof}
The expectation of an arbitrary rv $\xi$ exists iff $\int_0^\infty P(\xi>x)\,dx+\int_{-\infty}^0 P(\xi<x)\,dx<\infty$, and in this case
\[
E(\xi)= \int_0^\infty P(\xi>x)\,dx - \int_{-\infty}^0 P(\xi<x)\,dx.
\]

As a consequence we obtain from Fubini's theorem
\begin{align*}
&\int x\,\mathbb F_{t_0}(dx)\\
&= \int_0^\infty 1-E\left(1_{\left\{Z_{t_0}'\le x Z_{t_0}\right\}}Z_{t_0} \right)\,dx - \int_{-\infty}^0 E\left(1_{\left\{Z_{t_0}'\le x Z_{t_0}\right\}}Z_{t_0} \right)\,dx\\
&= \int_0^\infty E\left(1_{\left\{Z_{t_0}'> x Z_{t_0}\right\}}Z_{t_0} \right)\,dx - \int_{-\infty}^0 E\left(1_{\left\{Z_{t_0}'\le x Z_{t_0}\right\}}Z_{t_0} \right)\,dx\\
&= \int z \int_0^\infty 1_{\{z'>x z\}}\,dx\,\left(P*\left(Z_{t_0},Z_{t_0}'\right)\right)(d(z,z'))\\
&\hspace*{2cm} -\int z \int_0^\infty 1_{\{z'\le x z\}}\,dx\,\left(P*\left(Z_{t_0},Z_{t_0}'\right)\right)(d(z,z'))\\
&= \int z\max\left(\frac{z'} z,0\right)\left(P*\left(Z_{t_0},Z_{t_0}'\right)\right)(d(z,z'))\\
&\hspace*{2cm} +  \int z\min\left(\frac{z'} z,0\right)\left(P*\left(Z_{t_0},Z_{t_0}'\right)\right)(d(z,z'))\\
&= E\left(Z_{t_0}'\right).
\end{align*}
\end{proof}

As a consequence we obtain in particular
\[
E\left(\eta_{t_0}'\right) = - E\left(\eta_{t_0}\zeta_{t_0}\right) = - E\left(\eta_{t_0}\right) E\left(\zeta_{t_0}\right) = E\left(Z_{t_0}'\right).
\]

We close this section by giving some examples how Proposition \ref{prop:differentiabiliy_of_smsp} can be applied.

\begin{exam}\upshape
Put
\[
Z_t:= U\cos^2(\lambda t) + V \sin^2(\lambda t),\qquad t\in[0,1],
\]
where $U\ge 0$, $V\ge 0$ are rv with $E(U)=E(V)=1$ and $\lambda\in\R$. The process $\bfZ=(Z_t)_{t\in[0,1]}$ is pathwise differentiable with
\[
\frac\partial{\partial t}Z_t=\lambda\sin(2\lambda t)(V-U)=:Z'_t.
\]
The distribution of the derivative in distribution $\eta'_t$ is accessible under additional conditions on $U$ and $V$,  but it follows immediately from Lemma \ref{lem:equality_of_expectations} that in general $E(\eta'_t)=E(Z'_t)=0$.
\end{exam}

\begin{exam}\upshape
The constant generator process $Z_t\equiv1$, $t\in[0,1]$, gives rise to an SMSP $\bm\eta=(\eta_t)_{t\in[0,1]}$ with totally dependent univariate margins. The paths of this SMSP are constant a.\,s., which means that $\eta'_t=0$ a.\,s. This fact is reflected in Proposition \ref{prop:differentiabiliy_of_smsp}. If $Z_t\equiv1$, $t\in[0,1]$, then $\mathbb F_t(x)=1_{[0,\infty)}(x)$, which implies
\begin{equation*}
H_t(x)=\int_{-\infty}^0\exp(y)\mathbb F_t(-x/y)~dy=1_{[0,\infty)}(x).
\end{equation*}
\end{exam}

\begin{exam}\upshape
Let $(Z_0,Z_1)$ be the generator of a bivariate standard max-stable rv $(\eta_0,\eta_1)$ with independent margins. It is well-known that such a generator has to fulfill
\begin{equation*}
P(Z_0=0,Z_1=2)=P(Z_0=2,Z_1=0)=1/2.
\end{equation*}
Now define a generator process by
\begin{equation*}
Z_t:=Z_0+t(Z_1-Z_0)\left(=\max((1-t)Z_0,tZ_1)\right),\quad t\in[0,1],
\end{equation*}
and denote by $\bm\eta=(\eta_t)_{t\in[0,1]}$ the pertaining SMSP. Then obivously $\bm Z=(Z_t)_{t\in[0,1]}$ is pathwise differentiable with $Z'_t=Z_1-Z_0$, $t\in[0,1]$. Lemma \ref{lem:equality_of_expectations} instantly implies $E(\eta_t')=E(Z'_t)=0$.

Furthermore, we have for $x\in\R$ and $t\in[0,1]$
\begin{equation*}
\mathbb F_t(x)=E\left(1_{\{Z'_t\leq xZ_t\}}Z_t\right)=\begin{cases}1,&\qquad x\geq\frac1t,\\
1-t,&\qquad \frac1{t-1}\leq x<\frac1t,\\
0,&\qquad x<\frac1{t-1}.
\end{cases}
\end{equation*}
Hence, the corresponding rv $\zeta_t$ that follows the df $\mathbb F_t$ is discrete with $P(\zeta_t=\frac1{t-1})=1-t$ and $P(\zeta_t=\frac1t)=t$. Therefore we obtain
\begin{align*}
H_t(x)&=\int_{-\infty}^0\mathbb F_t(-x/y)\exp(y)\,dy\\
&=\begin{cases}\displaystyle\int_{-\infty}^{x(1-t)}\exp(y)(1-t)\,dy,&\qquad x<0, \\
\displaystyle\int_{-\infty}^{-xt}\exp(y)(1-t)\,dy+\int_{-xt}^{0}\exp(y)\,dy,&\qquad x\geq 0
\end{cases}\\
&=\begin{cases}(1-t)\exp(x(1-t)),&\qquad x<0, \\ 1-t\exp(-xt),&\qquad x\geq 0.
\end{cases}
\end{align*}
\end{exam}

\begin{exam}\upshape
Let $Z_0\sim\mathcal U(0,2)$ and $Z_1$ be a rv with $Z_0+Z_1=2$ a.\,s. Then $Z_1\sim\mathcal U(0,2)$ and $(Z_0,Z_1)$ defines a (bivariate) generator. Define the generator process
\begin{equation*}
Z_t:=Z_0+t(Z_1-Z_0),\quad t\in[0,1].
\end{equation*}
Again, $\bm Z=(Z_t)_{t\in[0,1]}$ is pathwise differentiable with $Z'_t=Z_1-Z_0$, $t\in[0,1]$. Clearly, $Z'_t\sim\mathcal U(-2,2)$, $t\in[0,1]$. This, along with the fact that $Z_{1/2}=1$ a.\,s., implies
\begin{equation*}
\mathbb F_{1/2}(x)=E\left(1_{\{Z'_{1/2}\leq x\}}\right)=P(Z'_{1/2}\leq x).
\end{equation*}
Hence, $\mathbb F_{1/2}$ is the df of the uniform distribution on $(-2,2)$. Elementary calculations now show that
\begin{equation*}
H_{1/2}(x)=\begin{cases}-\frac x4\Ei\left(\frac x2\right)+\frac12\exp\left(\frac x2\right), &\quad x<0,\\
-\frac x4\Ei\left(-\frac x2\right)-\frac12\exp\left(-\frac x2\right)+1, &\quad x>0,
\end{cases}
\end{equation*}
where $\Ei(x)=\int_{-\infty}^x\frac{\exp(t)}{t}~dt$ denotes the exponential integral which is well-defined for  $x<0$. Furthermore, we have $H_{1/2}(0)=1/2$. In particular, $H_{1/2}$ is continuous in 0 since the exponential integral satisfies $x\Ei(x)\to_{x\to0}0$. The density of $H_{1/2}$ is given by $h_{1/2}(x)=-\Ei\left(-\abs x/2\right)/4$, $x\not=0$.
\end{exam}

\end{document}